\theoremstyle{plain}
\newtheorem{thm}{Theorem}[section]
\newtheorem*{thm*}{Theorem}
\newtheorem*{cor*}{Corollary}
\newtheorem{prop}[thm]{Proposition}
\newtheorem{lem}[thm]{Lemma}
\newtheorem{cor}[thm]{Corollary}
\newtheorem*{claim*}{Claim}
\theoremstyle{definition}
\newtheorem{ex}[thm]{Example}
\newtheorem{rem}[thm]{Remark}
\newtheorem*{acknowledgments}{Acknowledgments}
\theoremstyle{remark}
\newtheorem*{proof of claim}{{\sl Proof of Claim}}
\numberwithin{equation}{thm}
\def\rank{\mathrm{rank}}
\newcommand{\rme}{\mathrm{e}}
\newcommand{\calM}{\mathcal{M}}
\newcommand{\calP}{\mathcal{P}}
\newcommand{\calR}{\mathcal{R}}
\newcommand{\calS}{\mathcal{S}}
\newcommand{\calT}{\mathcal{T}}
\newcommand{\fkm}{\mathfrak{m}}
\newcommand{\fkp}{\mathfrak{p}}
\newcommand{\mapright}[1]{%
\smash{\mathop{%
\hbox to 1cm{\rightarrowfill}}\limits^{#1}}}
\newcommand{\mapleft}[1]{%
\smash{\mathop{%
\hbox to 1cm{\leftarrowfill}}\limits_{#1}}}
\def\depth{\operatorname{depth}}
\def\Supp{\operatorname{Supp}}
\def\Ass{\operatorname{Ass}}
\def\height{\mathrm{ht}}
\def\Spec{\operatorname{Spec}}
\def\ol{\overline}
\def\Card{\operatorname{Card}}
\title{ideals of reduction number two}
\author{Shinya Kumashiro}
\address{Department of Mathematics and Informatics, Graduate School of Science and Engineering, Chiba University, Yayoi-cho 1-33, Inage-ku, Chiba, 263-8522, Japan}
\thanks{2010 {\em Mathematics Subject Classification.} 13H10, 13H15, 13A30}
\thanks{{\em Key words and phrases.} Hilbert function, associated graded ring, Sally module, Cohen-Macaulay ring, Bourbaki sequence}
\thanks{The author was supported by JSPS KAKENHI Grant Number JP19J10579 and JSPS Overseas Challenge Program for Young Researchers.}
\begin{document}

\begin{abstract}
In a local Cohen-Macaulay ring $(A, \mathrm{m})$, we study the Hilbert function of an $\mathrm{m}$-primary ideal $I$ whose reduction number is two. It is a continuous work of the papers of Huneke, Ooishi, Sally, and Goto-Nishida-Ozeki. With some conditions, we show the inequality $\mathrm{e}_1(I)\ge \mathrm{e}_0(I) - \ell_A (A/I) + \mathrm{e}_2(I)$ of the Hilbert coefficients, which is the converse inequality of Sally and Itoh. We also study relations between the Hilbert coefficients and the depth of the associated graded ring. 
\end{abstract}

\maketitle



\section{Introduction}\label{section1}

The purpose of this paper is to study the Hilbert function.
It is well-known that the Hilbert function is deeply related to the structure of the associated graded ring, the Rees algebra, and the given ring. 
One can consult \cite{RV} for the study of Hilbert functions.

Let $(A, \fkm)$ be a local Cohen-Macaulay ring of dimension $d>0$ and $I$ an $\fkm$-primary ideal. Assume that the residue field is infinite, and let $Q$ be a parameter ideal of $A$ such that $Q$ is a reduction of $I$. 
For non-negative integer $n$, the numerical function of the length of $A/I^{n+1}$ is called the {\it Hilbert function} of $I$. For all $n\gg 0$, the Hilbert function forms a polynomial 
\[
\ell_A (A/I^{n+1})=\rme_0(I)\binom{n+d}{d}-\rme_1(I)\binom{n+d-1}{d-1}+ \cdots + (-1)^{d}\rme_d(I)
\]
in $n$ of degree $d$ and $\rme_0 (I), \rme_1 (I), \dots, \rme_d (I)$ are called the {\it Hilbert coefficients} of $I$.

With these assumptions and notations, Northcott \cite{No} shows that the inequality $\rme_1(I) \ge \rme_0(I) - \ell_A(A/I)$. After that, Huneke and Ooishi \cite{H, O} show that $\rme_1(I)=\rme_0 (I)- \ell_A (A/I)$ if and only if $I^2=QI$. When this is the case, the associated graded ring of $I$ is Cohen-Macaulay. As a continuous work of them, Sally \cite{S} investigated when the equality 
\begin{eqnarray}\label{rankone}
\rme_1(I)=\rme_0 (I)- \ell_A (A/I)+1
\end{eqnarray}
holds. Finally, Goto, Nishida, and Ozeki \cite{GNO2} characterized the equality (\ref{rankone}) by the form of the Sally module, which is introduced by Vasconcelos \cite{V}. 
In particular, they proved that $I^3=QI^2$ if the equality (\ref{rankone}) holds. On the other hand, the converse does not holds, and the difference $\rme_1(I)-\rme_0 (I) + \ell_A (A/I)$ can be an arbitrary non-negative integer even if $I^3=QI^2$ holds. 

In this paper, we study about the Hilbert function/coefficients of ideals with $I^3=QI^2$. Note that the reduction number depends on the choice of $Q$ in general, see for example \cite{Hu, Ma1, Ma2}. 
As known results on $I^3=QI^2$, the Hilbert function and the depth of the associated graded ring are classified if $I^3 = QI^2$ and one of the following conditions is satisfied:

\begin {itemize}
\item $\dim A=1$.
\item \cite{S, V} $\ell _A (I^2/QI) = 1$.
\item \cite[Theorem 1.2.]{GNO} $\fkm I^2 \subseteq QI$, $\ell _ A (I^2/QI) = 2$, and $\ell_A (I^3/Q^2 I)<2d$.
\item \cite[Theorem 3.6.]{CPR} $I$ is an integrally closed ideal.
\end {itemize}

Among them, in this paper, we focus on the conditions $I^3 = QI^2$ and $\fkm I^2 \subseteq QI$. In this case, we give the inequality
\[
\mathrm{e}_1(I)\ge \mathrm{e}_0(I) - \ell_A (A/I) + \mathrm{e}_2(I)
\] 
if $\dim A\ge 2$, see Theorem \ref{c3.3}. It is, of course, a stronger result than Northcott since $\rme_2(I)$ is non-negative by Narita \cite{Na}. Furthermore our inequality is, surprisingly, the converse inequality of Sally and Itoh \cite{S2, I}. We also give characterizations of the equalities 
\begin{center}
$\mathrm{e}_1(I)= \mathrm{e}_0(I) - \ell_A (A/I) + \mathrm{e}_2(I)\ \ $ and  $\ \ \mathrm{e}_1(I)= \mathrm{e}_0(I) - \ell_A (A/I) + \mathrm{e}_2(I)+1$. 
\end{center}
Besides them, we classify the Hilbert functions when $\ell _ A (I^2/QI)$ is either three or four.

Let us explain how this paper is organized. Section \ref{Preliminary} is a preparation to prove our main results. First, we see that the Bourbaki sequence for the case involving an infinite field. We then survey the basic properties of the Sally module, which is one of the important techniques to study the Hilbert functions. Section \ref{Main results} presents the main results of this paper. One can find some examples to illustrate our results.

\section{Preliminary}\label{Preliminary}

\subsection{Bourbaki sequence}\label{section2}

Let $R$ be a Noetherian ring and $M$ an $R$-module. 
Then a {\it Bourbaki sequence} of $M$ means a short exact sequence 
\[
0 \to F \to M \to I \to 0
\]
of $R$-modules, where $F$ is a free $R$-module and $I$ is an ideal of $R$. 
As a fundamental result, a Bourbaki sequence of $M$ always exists if $R$ is a normal domain and $M$ is a finitely generated torsionfree $R$-module (see \cite[Chapter VII, \S 4, 9. Theorem 6.]{B}). 
In this subsection, we investigate a Bourbaki sequence for the case involving an infinite field. An essential part of the result in this subsection is already proven in Bourbaki \cite[Chapter VII, \S 4, 9. Theorem 6.]{B}, but let us present a proof for the sake of completeness.

Throughout this subsection, let $R$ be a Noetherian ring and $M$ a finitely generated $R$-module. Suppose that $R$ contains an infinite field $k$ as a subring. 
For convenience, $M(\fkp)$ denotes $M_{\fkp}/\fkp M_{\fkp}$ for all prime ideals $\fkp$ of $R$. For an element $x\in M$, we denote by $x(\fkp)$ the canonical image of $x$ into $M(\fkp)$. Note that, for all prime ideal $\fkp$ of $R$, the canonical map $k\to R \to R(\fkp)$ is injective since $k\cap \fkp=(0)$. Hence we may assume $k\subseteq R(\fkp)$. We start with the following.


\begin{lem}\label{c2.1}
Let $\calP$ be a subset of the support $\Supp_R M=\{\fkp\in \Spec R \mid M_\fkp\ne0\}$ of $M$. Set $M=(x_1, x_2, \dots, x_n)$. Then the following assertions hold true. 
\begin{enumerate}[{\rm (1)}] 
\item Let $x$ and $y$ be elements of $M$. Set
\[
\calP'=\{ \fkp\in \calP \mid \text{$x(\fkp)$ and $y(\fkp)$ are linearly dependent over $R(\fkp)$} \}.
\]
Assume that $\calP'$ is finite and, for all $\fkp \in \calP'$, $x(\fkp)\ne 0$ or $y(\fkp)\ne0$. Then there is an element $a$ in $k$ such that $(x+ay)(\fkp)$ is nonzero for all $\fkp \in \calP$.
\item If $\calP$ is finite, then we can choose $y\in \sum_{i=1}^{n} kx_i$ so that $y(\fkp)$ is nonzero for all $\fkp\in \calP$. 
\item Suppose $R$ is a domain and $r=\rank_R M\ge 2$. Set 
\[
\calP=\{ \fkp\in \Supp_R M \mid \depth R_\fkp \le 1\}.
\]
Then we can choose $z\in \sum_{i=1}^{n} kx_i$ so that $z(\fkp)$ is nonzero for all $\fkp\in \calP$. 
\end{enumerate}
\end{lem}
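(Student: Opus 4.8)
The three assertions have increasing depth: (1) and (2) are standard genericity statements over the infinite field $k$, while the plan for (3) is to reduce it to (1) by means of an auxiliary finiteness result whose proof is the real content.

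For (1): when $\fkp\in\calP\setminus\calP'$ the images $x(\fkp),y(\fkp)$ are independent over $R(\fkp)$, so $(x+ay)(\fkp)=x(\fkp)+ay(\fkp)\ne0$ for every $a\in k$ (a vanishing would be a dependence relation with coefficient $1$ on $x(\fkp)$; note $k\cap\fkp=(0)$, so the coefficient of $y(\fkp)$ lies in $R(\fkp)$). For each of the finitely many $\fkp\in\calP'$, as $R(\fkp)$ is a field and $x(\fkp),y(\fkp)$ are not both zero, $x(\fkp)+ay(\fkp)=0$ has at most one solution $a\in R(\fkp)$, hence at most one $a\in k$; since $k$ is infinite some $a\in k$ avoids all these finitely many values. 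For (2): $V:=\sum_{i=1}^{n}kx_i$ is a finite-dimensional $k$-vector space, and for $\fkp\in\calP$ the vector space $M(\fkp)\ne0$ is generated by the $x_i(\fkp)$, so the kernel of the $k$-linear map $V\to M(\fkp)$ is a proper $k$-subspace; a vector space over an infinite field is not a finite union of proper subspaces, so some $y\in V$ lies outside all of them.

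For (3), the key is the following auxiliary fact: \emph{if $P$ is a finitely generated $R$-module and $w\in P$ satisfies $\Ann_R(w)=0$, then $\{\fkp\in\Spec R\mid\depth R_\fkp\le1,\ w(\fkp)=0\}\subseteq\Ass_R(P/Rw)$, so this set is finite.} Granting it, set $K=\mathrm{Frac}(R)$. Since $r=\rank_R M\ge2$, the $x_i\otimes1$ generate $M\otimes_R K\ne0$ over $K$; after reindexing $x_1\otimes1\ne0$, so putting $x=x_1$ we have $\Ann_R(x)=0$, and $S:=\{\fkp\in\calP\mid x(\fkp)=0\}$ is finite. As in (2), the finitely many proper subspaces $\{v\in V\mid v(\fkp)=0\}$ ($\fkp\in S$) together with the subspace $\{v\in V\mid v\otimes1\in K\cdot(x\otimes1)\}$ — proper since $\dim_K(M\otimes_R K)=r\ge2$ — do not cover $V$, so we may choose $y\in V$ avoiding all of them: then $y(\fkp)\ne0$ for $\fkp\in S$, and $x\otimes1,y\otimes1$ are $K$-independent. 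Now apply (1) to the pair $x,y$ and to $\calP$. Two vectors over a field are linearly dependent iff their exterior product vanishes, so $\calP'=\{\fkp\in\calP\mid(x\wedge y)(\fkp)=0\}$ with $x\wedge y\in\wedge^2_R M$; as $(x\wedge y)\otimes1=(x\otimes1)\wedge(y\otimes1)\ne0$ we get $\Ann_R(x\wedge y)=0$, and the auxiliary fact with $P=\wedge^2_R M$ shows $\calP'$ is finite. Also $x(\fkp),y(\fkp)$ are not both zero for $\fkp\in\calP'$: if $x(\fkp)=0$ then $\fkp\in S$, hence $y(\fkp)\ne0$. Thus (1) yields $a\in k$ with $z:=x+ay\in\sum_i kx_i$ and $z(\fkp)\ne0$ for all $\fkp\in\calP$.

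It remains to prove the auxiliary fact, which I expect to be the main obstacle. Let $\fkp$ satisfy $\depth R_\fkp\le1$ and $w(\fkp)=0$. Since $\Ann_R(w)=0$, the image of $w$ in $P\otimes_R\mathrm{Frac}(R)$ is nonzero, so $\fkp\ne(0)$ and hence $\depth R_\fkp=1$; localizing, we may take $(R,\fkm)$ local with $\depth R=1$, $\Ann_R(w)=0$ (so $Rw\cong R$ and $0\to R\xrightarrow{w}P\to P/Rw\to0$ is exact), and $w\in\fkm P$. In the long exact sequence of $\Ext^{\bullet}_R(R/\fkm,-)$, the connecting map makes $\Hom_R(R/\fkm,P/Rw)$ surject onto the kernel of $\Ext^1_R(R/\fkm,R)\xrightarrow{w_{\ast}}\Ext^1_R(R/\fkm,P)$. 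Writing $w=\sum a_ip_i$ with the $p_i$ generating $P$ and $a_i\in\fkm$, the map $R\xrightarrow{w}P$ factors through $R\xrightarrow{(a_i)}R^{\oplus m}$, which induces $0$ on $\Ext^1_R(R/\fkm,-)$ because each $a_i\in\fkm$ kills the $R/\fkm$-module $\Ext^1_R(R/\fkm,R)$; hence $w_{\ast}=0$, and the kernel is all of $\Ext^1_R(R/\fkm,R)$, which is nonzero since $\depth R=1$. Therefore $\Hom_R(R/\fkm,P/Rw)\ne0$, i.e. $\fkm\in\Ass_R(P/Rw)$, which is the assertion. The only remaining points are to confirm that the subspaces of $V$ above are genuinely proper — precisely where the hypotheses $\fkp\in\Supp_R M$ and $\rank_R M\ge2$ are used.
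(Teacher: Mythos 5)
Your argument is correct, but for part (3) it takes a genuinely different route from the paper. For (1) you replace the paper's induction on the size of $\calP'$ by the direct observation that each $\fkp\in\calP'$ rules out at most one scalar $a\in k$ (using $k\hookrightarrow R(\fkp)$); this is the same idea, just organized without induction, and (2) is the standard subspace-avoidance argument either way. The real divergence is in (3): the paper picks a maximal $K$-independent subset of the generators, forms the free submodule $L=\sum_{i=1}^{r}Rx_i$ and a non-zerodivisor $a$ with $aM\subseteq L$ (and then a second pair $L'$, $b$), and bounds the two bad loci by $\Ass_R R/aR$ and $\Ass_R R/bR$, so that Lemma (1) applies to $x_1$ and $y$. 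You instead prove an auxiliary lemma — for $w$ with $\Ann_R(w)=0$, every $\fkp$ with $\depth R_\fkp\le 1$ and $w(\fkp)=0$ lies in $\Ass_R(P/Rw)$, proved via the long exact sequence of $\Ext_R^{\bullet}(R/\fkm,-)$ and the vanishing of multiplication by elements of $\fkm$ on $\Ext^1_R(R/\fkm,R)\ne0$ — and you apply it twice: to $x_1$ in $M$ to control the vanishing locus, and to $x_1\wedge y$ in $\wedge^2_RM$ to control the dependence locus, the wedge converting ``fiberwise dependent'' into ``fiberwise zero.'' Both are sound; your checks (exterior powers commute with base change, $\Ann_R(x\wedge y)=0$ from generic independence, $y(\fkp)\ne0$ on the finite set where $x_1(\fkp)=0$) all go through, and the properness of the avoided subspaces is justified exactly by $\fkp\in\Supp_RM$ and $r\ge2$ as you say. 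The paper's proof is more elementary, staying close to Bourbaki and using only conductor non-zerodivisors and $\Ass$ of hypersurface quotients; your auxiliary lemma is more general (no local freeness or torsionfreeness needed, in contrast with the paper's Lemma \ref{a2.2}, whose implication (1)$\Rightarrow$(2) it resembles), makes explicit where the hypothesis $\depth R_\fkp\le1$ enters (nonvanishing of $\Ext^1_R(R/\fkm,R)$), and avoids choosing the second free submodule and conductor.
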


\begin{proof}
(1) If $\fkp \in \calP\setminus \calP'$, then $(x+ ay)(\fkp)$ is nonzero for all $a\in k$. Hence we may assume that $\calP=\calP'$. We prove our assertion by induction on the number of element in $\calP$. If $\Card(\calP)=1$, it is trivial.
Assume  $\Card(\calP)>1$ and take $\fkp_0$ in $\calP$. Then there is an element $b$ in $k$ such that $(x+by)(\fkp)$ is nonzero for all $\fkp \in \calP\setminus \{ \fkp_0\}$ by induction hypothesis. Set $z=x+by$.
We may assume that $z(\fkp_0)=0$. Then $y(\fkp_0)$ is nonzero. Set 
\[
\calP''=\{ \fkp\in \calP \mid y(\fkp)\ne 0 \}.
\]
For all $\fkp \in \calP''$, there exists an element $\alpha_\fkp \in R(\fkp)$ such that $x(\fkp)=\alpha_\fkp{\cdot}y(\fkp)$ since $x(\fkp)$ and $y(\fkp)$ are linearly dependent. We can choose $c\in k$ so that $c\ne \alpha_{\mathfrak{p}}+b$ for all $\mathfrak{p}\in \mathcal{P}$ 
since $k$ is infinite. Then $a=b-c$ is what we desired. In fact, if $\fkp \in \calP''$, 
\[
(x+ay)(\fkp)=(\alpha_\fkp + b - c){\cdot}y(\fkp)\ne 0.
\]
If $\fkp\in \calP\setminus \calP''$, we have $y(\fkp)=0$ and $\fkp\ne \fkp_0$. Hence 
\[
(x+ay)(\fkp)=x(\fkp)=z(\fkp)\ne 0.
\]

(2) follows from (1).

(3) Let $K$ denote the quotient field of $R$. After renumbering $x_1, x_2, \dots, x_n$, we may assume that $1\otimes x_1$, $1\otimes x_2$, $\dots$, $1\otimes x_r$ is a $K$-free basis of $K\otimes_R M$. Set $L=\sum_{i=1}^{r} Rx_i$. Note that $L$ is a free $R$-module of rank $r$. We can choose a non-zerodivisor $a$ of $R$ so that $aM\subseteq L$ since $K\otimes_R (M/L)=0$. For a prime ideal $\fkp$ of $R$, if $a\not\in \fkp$, then $x_1(\fkp), x_2(\fkp), \dots, x_r(\fkp)$ are linearly independent over $R(\fkp)$ since $M_\fkp=L_\fkp$. On the other hand, by (2), we can choose $y\in \sum_{i=1}^{n} kx_i$ so that $y(\fkp)$ is nonzero for all $\fkp\in \{0\}\cup\Ass_R R/aR$. 
After renumbering $x_1, x_2, \dots, x_r$ if necessary, we may assume that $1\otimes x_1$, $\dots$, $1\otimes x_{r-1}$, $1\otimes y$ is a $K$-free basis of $K\otimes_R M$. Set $L'=\sum_{i=1}^{r-1} Rx_i + Ry$. Let $b$ be a non-zerodivisor of $R$ such that $bM\subseteq L'$. Then, for a prime ideal $\fkp$ with $b\not\in \fkp$,  $x_1(\fkp), x_2(\fkp), \dots, x_{r-1}(\fkp), y(\fkp)$ are linearly independent over $R(\fkp)$. Hence the set 
\[
\calP'=\{ \fkp\in \calP \mid \text{$x_1(\fkp)$ and $y(\fkp)$ are linearly dependent over $R(\fkp)$} \}
\]
is finite since $\calP' \subseteq \{ \fkp \in \calP \mid b\in \fkp\} \subseteq \Ass_R R/bR$. We furthermore have 
\[
\begin{cases}
x_1(\fkp)\ne0 & \text{if $a\not\in \fkp$}\\
y(\fkp)\ne0 & \text{if $a\in \fkp$}
\end{cases}
\]
for all $\fkp\in \calP'$. Therefore we have the conclusion by (1).
\end{proof}

\begin{lem}{\rm (cf. \cite[Chapter VII, \S 4, 9. Lemma 7.]{B})}\label{a2.2}
Let $R$ be a Noetherian domain and $M$ a finitely generated torsionfree $R$-module of rank $r\ge 2$. Suppose that $M_\fkp$ is $R_\fkp$-free for all prime ideals $\fkp$ with $\depth R_\fkp=1$.
Then, for $z\in M$, the following conditions are equivalent.
\begin{enumerate}[{\rm (1)}] 
\item $Rz$ is $R$-free and $M/Rz$ is a torsionfree $R$-module.
\item $z(\fkp)$ is nonzero for all $\fkp\in \Spec R$ with $\depth R_\fkp\le 1$.
\end{enumerate}
When this is the case, $\rank_R M/Rz=r-1$ and $(M/Rz)_{\mathfrak{p}}$ is $R_{\mathfrak{p}}$-free for all $\mathfrak{p}\in \Spec R$ with $\depth R_{\mathfrak{p}}\le 1$.
\end{lem}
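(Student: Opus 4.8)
The plan is to localize everywhere and to extract both implications from the short exact sequence $0\to Rz\to M\to M/Rz\to 0$. Write $K$ for the quotient field of $R$ and, using that $M$ is torsionfree, regard $M\subseteq K\otimes_R M$; put $N=M\cap Kz$, the preimage in $M$ of the torsion submodule $\tau(M/Rz)$. Then $Rz\subseteq N\subseteq M$, $N$ is torsionfree of rank $\le1$, $N/Rz=\tau(M/Rz)$, and hence ``$M/Rz$ is torsionfree'' is equivalent to ``$N=Rz$''. The hypotheses are used only at primes $\fkp$ with $\depth R_\fkp\le1$: for such $\fkp$ the module $M_\fkp$ is free (trivially if $\fkp=(0)$, by assumption if $\depth R_\fkp=1$), and $z(\fkp)\ne0$ says precisely that the image of $z$ in $M(\fkp)$ is a nonzero vector, so by Nakayama's lemma $z$ is part of an $R_\fkp$-basis of $M_\fkp$ and $(M/Rz)_\fkp\cong R_\fkp^{\,r-1}$ is free.

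For (1) $\Rightarrow$ (2): first $z\ne0$ (otherwise $Rz$ is not free of rank one), so $z((0))=1\otimes z\ne0$. Suppose $\depth R_\fkp=1$ and $z(\fkp)=0$, i.e. $z\in\fkp M_\fkp$; I will contradict (1). Pick a nonzero $c\in\fkp R_\fkp$; as $\depth(R_\fkp/cR_\fkp)=0$, pick $b\in R_\fkp\setminus cR_\fkp$ with $(\fkp R_\fkp)\,b\subseteq cR_\fkp$. Writing $z$ as a sum of elements $qm$ with $q\in\fkp R_\fkp$ and $m\in M_\fkp$ gives $bz=cw$ for some $w\in M_\fkp$; since $b\notin cR_\fkp$ and $M_\fkp$ is torsionfree, $w\notin R_\fkp z$, so the class of $w$ is a nonzero element of $(M/Rz)_\fkp=M_\fkp/R_\fkp z$ annihilated by $c\ne0$ — impossible, because a localization of the torsionfree module $M/Rz$ is torsionfree. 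Hence $z(\fkp)\ne0$.

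For (2) $\Rightarrow$ (1): from $z((0))\ne0$ we get $z\ne0$, so $Rz\cong R$, and it remains to show $T:=\tau(M/Rz)=0$. Assume $T\ne0$ and pick $\fkp\in\Ass_R T$; then $\fkp\ne(0)$ as $T$ is torsion. If $\depth R_\fkp\le1$, then $(M/Rz)_\fkp$ is free by the first paragraph, so $T_\fkp=0$, contradicting $\fkp\in\Supp_R T$; thus $\depth R_\fkp\ge2$. Localizing $0\to Rz\to N\to T\to0$ at $\fkp$ and applying the depth lemma,
\[
\depth_{R_\fkp}T_\fkp\ \ge\ \min\{\depth R_\fkp-1,\ \depth_{R_\fkp}N_\fkp\}\ \ge\ 1,
\]
where the last step uses that the nonzero module $N_\fkp$ is torsionfree over the domain $R_\fkp$ with $\fkp\ne(0)$, hence of positive depth. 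This contradicts $\fkp\in\Ass_R T$, so $T=0$ and $M/Rz$ is torsionfree.

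For the closing statements, tensoring $0\to Rz\to M\to M/Rz\to0$ with $K$ gives $\rank_R(M/Rz)=r-1$, and the basis-extension argument of the first paragraph already shows $(M/Rz)_\fkp$ is $R_\fkp$-free for every $\fkp$ with $\depth R_\fkp\le1$. I expect the real obstacle to be (2) $\Rightarrow$ (1): the point is that torsion in $M/Rz$ is visible only at primes of depth $\le1$, and establishing this needs the depth-lemma bootstrap along $Rz\subseteq N\subseteq M$ together with the identification $\tau(M/Rz)=N/Rz$; the unimodularity of $z$ at depth-one primes and the remaining bookkeeping are then routine.
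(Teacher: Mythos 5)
Your proof is correct. For (2) $\Rightarrow$ (1) you follow essentially the same route as the paper: at every prime with $\depth R_\fkp\le 1$, freeness of $M_\fkp$ together with $z(\fkp)\ne 0$ makes $z$ part of a basis, and a depth-lemma argument excludes associated primes of the torsion; the only organizational difference is that you run the depth lemma on $0\to Rz\to N\to \tau(M/Rz)\to 0$ with $N=M\cap Kz$ at primes of depth $\ge 2$, whereas the paper applies it to $0\to Rz\to M\to M/Rz\to 0$ to force $\depth R_\fkp=1$ and then gets the contradiction from the splitting there — the two bookkeepings are interchangeable. The genuine difference is in (1) $\Rightarrow$ (2): the paper argues homologically (for $\depth R_\fkp=1$, the sequence $0\to (Rz)_\fkp\to M_\fkp\to (M/Rz)_\fkp\to 0$ and the freeness of $M_\fkp$ give $\pd_{R_\fkp}(M/Rz)_\fkp\le 1$; torsionfreeness rules out depth $0$, so Auslander--Buchsbaum makes $(M/Rz)_\fkp$ free, the sequence splits, and $z(\fkp)\ne0$), while you argue by hand: assuming $z\in\fkp M_\fkp$, you pick $0\ne c\in\fkp R_\fkp$ and a socle element $b$ of $R_\fkp/cR_\fkp$ (available since $\depth R_\fkp=1$) to manufacture $w\in M_\fkp\setminus R_\fkp z$ with $cw\in R_\fkp z$, i.e.\ a nonzero torsion element of $(M/Rz)_\fkp$, contradicting (1). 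Your version buys slightly more — it never uses the hypothesis that $M_\fkp$ is free, so that implication holds for an arbitrary finitely generated torsionfree $M$ — at the cost of an explicit element chase; the paper's version is shorter given the freeness hypothesis, which is in any case needed for the converse direction and for the closing claim. Your handling of the final statements (rank $r-1$ by tensoring with $K$, and freeness of $(M/Rz)_\fkp$ at primes of depth $\le 1$ via the basis-extension observation) is also fine.
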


\begin{proof}
(1) $\Rightarrow$ (2) For all prime ideal $\fkp$ with $\depth R_\fkp=1$, let
\begin{equation}\label{c2.2.1}
0\to (Rz)_\fkp \to M_\fkp \to (M/Rz)_\fkp \to 0
\end{equation}
be the exact sequence of $R_\fkp$-modules. The projective dimension of $(M/Rz)_{\mathfrak{p}}$ is at most one by (\ref{c2.2.1}) and our hypothesis. If $\depth_{R_\fkp} (M/Rz)_\fkp=0$, then $\fkp\in \Ass_R M/Rz \subseteq \{0\}$. It follows  a contradiction. Hence $(M/Rz)_\fkp$ is $R_\fkp$-free. It follows that the short exact sequence (\ref{c2.2.1}) splits and $z(\fkp)$ is nonzero. 
For the case where $\fkp=0$, $z(\fkp)$ is nonzero since $Rz\cong R$.

(2) $\Rightarrow$ (1) Let $K$ be the field of fractions of $R$. Since $z(0)$ is nonzero, $K\otimes_R Rz \cong K$. Hence $Rz\cong R$. 
Assume that $M/Rz$ is not a torsionfree $R$-module. Then there exists a prime ideal $\fkp \in \Ass_R (M/Rz) \setminus \{0\}$. By depth lemma, $\depth R_\fkp=1$ since $\depth_{R_\fkp} M_\fkp>0$. Therefore, since $z/1$ is a part of free basis of $M_\fkp$, the canonical map $(Rz)_\fkp \to M_\fkp$ is split mono. Thus $(M/Rz)_\fkp$ is $R_\fkp$-free, which is a contradiction for $\fkp\in \Ass_R (M/Rz)$ and $\depth R_\fkp=1$.
\end{proof}

Combining these two of lemmas, we have the following.

\begin{thm}\label{a2.1}
Let $R$ be a Noetherian domain and assume that $R$ contains an infinite field $k$ as a subring. Let $M=(x_1, x_2, \dots, x_n)$ be a torsionfree $R$-module of rank $r>0$. Suppose that $M_\fkp$ is $R_\fkp$-free for all $\fkp\in \Spec R$ with $\depth R_\fkp=1$. Then there are elements $z_1$, $\dots$, $z_{r-1}$ in $\sum_{i=1}^{n} kx_i$ which satisfy the following two conditions:
\begin{enumerate}[{\rm (1)}] 
\item $L= \sum_{i=1}^{r-1} Rz_i$ is a free $R$-module of rank $r-1$ and
\item $M/L$ is a torsionfree $R$-module of rank one.
\end{enumerate}
\end{thm}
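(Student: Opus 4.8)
The plan is to prove Theorem \ref{a2.1} by induction on $r$, peeling off one free rank at a time using Lemma \ref{c2.1}(3) and Lemma \ref{a2.2}. If $r=1$ there is nothing to construct: $L$ is the zero module and $M$ itself has rank one. So assume $r\ge 2$. First I would apply Lemma \ref{c2.1}(3) to the generating set $x_1,\dots,x_n$ and the set $\calP=\{\fkp\in\Supp_R M\mid \depth R_\fkp\le 1\}$; this produces an element $z_1\in\sum_{i=1}^n kx_i$ with $z_1(\fkp)\ne 0$ for every $\fkp\in\calP$. Since $M$ is torsionfree of rank $r\ge 1$, we have $\Supp_R M=\Spec R$, so $\calP$ is exactly the set of primes of depth $\le 1$, and condition (2) of Lemma \ref{a2.2} is satisfied for $z_1$.

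Next I would invoke Lemma \ref{a2.2} (whose hypotheses—$M$ torsionfree of rank $r\ge 2$, and $M_\fkp$ free at depth-one primes—are precisely what we have assumed) to conclude that $Rz_1$ is $R$-free, that $M_1:=M/Rz_1$ is torsionfree, that $\rank_R M_1=r-1$, and, crucially, that $(M_1)_\fkp$ is $R_\fkp$-free for all $\fkp$ with $\depth R_\fkp\le 1$. This last clause is exactly the running hypothesis needed to re-apply the whole argument to $M_1$. I would then note that $M_1$ is generated by the images $\bar x_1,\dots,\bar x_n$ of the $x_i$, and that $\sum_{i=1}^n k\bar x_i$ is the image of $\sum_{i=1}^n kx_i$ under the projection $M\twoheadrightarrow M_1$.

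By the induction hypothesis applied to $M_1$ (rank $r-1$), there exist elements $\bar z_2,\dots,\bar z_{r-1}$ in $\sum_{i=1}^n k\bar x_i\subseteq M_1$ such that $\overline L:=\sum_{i=2}^{r-1}R\bar z_i$ is $R$-free of rank $r-2$ and $M_1/\overline L$ is torsionfree of rank one. Lift each $\bar z_i$ to an element $z_i\in\sum_{i=1}^n kx_i$ (possible since the projection is $k$-linear and surjective on these finite sums), and set $L=\sum_{i=1}^{r-1}Rz_i$. Then $L/Rz_1=\overline L$, and from the exact sequence $0\to Rz_1\to L\to\overline L\to 0$ with both ends free, $L$ is free; a rank count gives $\rank_R L=1+(r-2)=r-1$, and since $z_1,\dots,z_{r-1}$ map to part of a $K$-basis of $K\otimes_R M$ this means $L$ is free of rank exactly $r-1$, giving (1). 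Finally $M/L\cong M_1/\overline L$ is torsionfree of rank one, giving (2).

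I expect the main obstacle—really the only non-formal point—to be verifying cleanly that $M_1=M/Rz_1$ inherits all the hypotheses needed to run the induction, namely torsionfreeness, the correct rank drop, and local freeness at depth-one primes; but this is exactly the content of the final sentence of Lemma \ref{a2.2}, so the work has essentially been done. A secondary bookkeeping point is to make sure the $z_i$ can genuinely be chosen inside $\sum_{i=1}^n kx_i$ rather than merely in $M$: this follows because Lemma \ref{c2.1}(3) already delivers $z_1$ there, and the lifting of the $\bar z_i$ can be done within the $k$-span by choosing, for each $\bar z_i=\sum_j c_{ij}\bar x_j$ with $c_{ij}\in k$, the obvious preimage $z_i=\sum_j c_{ij}x_j$.
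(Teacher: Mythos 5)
Your proof is correct and follows essentially the same route as the paper: induction on $r$, using Lemma \ref{c2.1}(3) to pick $z_1\in\sum_{i=1}^n kx_i$ nonvanishing at all primes of depth at most one, Lemma \ref{a2.2} (including its final clause) to pass to the torsionfree quotient $M/Rz_1$ of rank $r-1$ with the local freeness hypothesis preserved, and then lifting the elements supplied by the induction hypothesis back to $\sum_{i=1}^n kx_i$, with $L$ free of rank $r-1$ via the exact sequence $0\to Rz_1\to L\to \overline{L}\to 0$. Your explicit verification that the quotient inherits the hypotheses and that the lifts stay in the $k$-span only spells out details the paper leaves implicit.
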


\begin{proof}
We prove by induction on $r$. The case where $r=1$ is trivial. Assume that $r>1$ and our assertion holds for $r-1$. By Lemma \ref{c2.1} (3), we can choose $z_1\in \sum_{i=1}^{n} kx_i$ so that $z_1(\fkp)$ is nonzero for all $\fkp\in \Spec R$ with $\depth R_\fkp \le 1$. Hence $Rz_1$ is $R$-free and $M/Rz_1$ is a torsionfree $R$-module by Lemma \ref{a2.2}. By induction hypothesis, there exist $z_2$, $\dots$, $z_{r-1}$ in $\sum_{i=1}^{n} kx_i$ which satisfy the following two conditions:
\begin{enumerate}[{\rm (1)}] 
\item $\sum_{i=2}^{r-1} R\ol{z_i}$ is a free $R$-module of rank $r-2$, where $\ol{z_i}$ denotes the image of $z_i$ into $M/Rz_1$.
\item $M/L$ is a torsionfree $R$-module of rank one, where $L=\sum_{i=1}^{r-1} R z_i$.
\end{enumerate}
Then $z_1$, $\dots$, $z_{r-1}$ are what we desired since $L$ is an $R$-free module of rank $r-1$ by the split exact sequence
\[
0\to Rz_1 \to L \to \sum_{i=2}^{r-1} R\ol{z_i} \to 0
\]
of $R$-modules.
\end{proof}

As a direct consequence, we have a graded version of Bourbaki sequence.

\begin{cor}\label{c2.4}
Let $R=\bigoplus_{n\in \mathbb{Z}} R_n$ be a $\mathbb{Z}$-graded Noetherian domain and $M=\bigoplus_{n\in \mathbb{Z}} M_n$ a finitely generated graded $R$-module of rank $r>0$. Suppose that $R_0$ is an infinite field and $M=RM_1$. If $M$ is a torsionfree $R$-module and $M_\fkp$ is a free $R_\fkp$-module for all $\fkp\in \Spec R$ with $\depth R_\fkp=1$, then there exists a graded exact sequence
\[
0 \to R(-1)^{\oplus(r-1)} \to M \to I(m) \to 0
\]
of $R$-modules, where $m$ is an integer and $I$ is a graded ideal of $R$.
\end{cor}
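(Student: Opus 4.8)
The plan is to deduce the corollary directly from Theorem~\ref{a2.1}, choosing the generators $x_i$ there to be homogeneous of degree one and keeping track of the grading.

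First I would record that $M$ has a finite homogeneous generating set concentrated in degree one: since $M$ is finitely generated it has finitely many homogeneous generators, and rewriting each of them, via $M=RM_1$, as an $R$-combination of elements of $M_1$ and passing to homogeneous components shows that $M=(x_1,\dots,x_n)$ for suitable $x_1,\dots,x_n\in M_1$. Apply Theorem~\ref{a2.1} with $k=R_0$ (an infinite field sitting in $R$ as a subring): there are elements $z_1,\dots,z_{r-1}\in\sum_{i=1}^{n}R_0x_i\subseteq M_1$ with $L=\sum_{i=1}^{r-1}Rz_i$ free of rank $r-1$ and $M/L$ torsionfree of rank one. Since the $z_i$ are homogeneous of degree one, the surjection $R(-1)^{\oplus(r-1)}\twoheadrightarrow L$ sending the standard basis onto $(z_1,\dots,z_{r-1})$ is a morphism of graded $R$-modules; it is injective because $L$ is free of rank $r-1$, so the map is, up to isomorphism, a surjective endomorphism of $R^{\oplus(r-1)}$ and hence bijective. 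Thus $R(-1)^{\oplus(r-1)}\cong L$ as graded modules, and we obtain a graded short exact sequence $0\to R(-1)^{\oplus(r-1)}\to M\to M/L\to 0$.

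It remains to show that a finitely generated graded torsionfree $R$-module $N$ of rank one (here $N=M/L$) is isomorphic to $I(m)$ for some graded ideal $I$ and some $m\in\mathbb{Z}$; this is the only step that needs genuine care, as one must ensure every identification is by a degree-zero graded map so that the shift $m$ is well-defined. I would fix a nonzero homogeneous $\xi\in N$ of degree $e$; torsionfreeness and rank one make $R(-e)\to N$, $1\mapsto\xi$, injective with image the graded submodule $R\xi\cong R(-e)$. For each homogeneous generator $\eta$ of $N$ there is, after clearing denominators in the $K$-linear dependence of $\eta$ and $\xi$ over $K=\operatorname{Frac}R$ and taking a suitable homogeneous component, a homogeneous relation $b_\eta\eta=a_\eta\xi$ with $0\ne b_\eta\in R$ and $a_\eta\in R$. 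Then $b:=\prod_\eta b_\eta$ satisfies $bN\subseteq R\xi$, and $bN$, generated by the homogeneous elements $b\eta$, is a graded submodule of $R\xi\cong R(-e)$; hence $bN\cong J(-e)$ for some graded ideal $J$ of $R$, while multiplication by $b$ furnishes a graded isomorphism $N\xrightarrow{\ \sim\ }(bN)(\deg b)$. Therefore $N\cong J(\deg b-e)$, and taking $I=J$ and $m=\deg b-e$ completes the argument. (Alternatively, inverting the multiplicative set of nonzero homogeneous elements of $R$ produces a $\mathbb{Z}$-graded ring in which every nonzero homogeneous element is a unit, over which the localization of $N$ is graded free of rank one; clearing denominators in the corresponding graded embedding of $N$ yields the same conclusion.)
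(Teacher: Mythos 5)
Your proof is correct and follows exactly the route the paper intends: the paper states Corollary \ref{c2.4} as a direct consequence of Theorem \ref{a2.1}, and your argument simply makes this precise by choosing the generators in $M_1$ so that the $z_i$ are homogeneous of degree one, and then carefully identifying the graded torsionfree rank-one quotient $M/L$ with a shifted graded ideal $I(m)$. The details you supply (injectivity of $R(-1)^{\oplus(r-1)}\to L$ via the surjective-endomorphism argument, and the homogeneous clearing-of-denominators step) are exactly the standard facts the paper leaves implicit, and they are handled correctly.
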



\subsection{Sally module}\label{subsection2.2}

Let $(A, \fkm, k)$ be a local Cohen-Macaulay ring of dimension $d>0$. For simplicity, we assume that the residue field $k$ is infinite. Let $I$ be an $\fkm$-primary ideal of $A$ and $Q$ a minimal reduction of $I$. 
Let 
\begin{center}
$\calR=A[It]$ and $\calT=A[Qt]$
\end{center}
denote the {\it Rees algebras} of $I$ and $Q$ respectively, where $t$ stands for an indeterminate over $A$. We denote by $G=\calR/I\calR=\bigoplus_{n\ge 0} I^n/I^{n+1}$ the {\it associated graded ring} of $I$.
A finitely generated graded $\calT$-module 
\[
\calS_Q (I)=I\calR/I\calT=\bigoplus_{n\ge1} I^{n+1}/IQ^n
\]
is called the {\it Sally module} of $I$ with respect to $Q$. Set $\calS=\calS_Q (I)$.

The importance of the Sally module is a relationship with the Hilbert function and the depth of the associated graded ring $G$. 
Let $\rme_{i}=\rme_{i} (I)$ denote the $i$th Hilbert coefficient of $I$.
The following results are fundamental.

\begin{prop}{\rm (\cite[Lemma 2.1. and Proposition 2.2.]{GNO})}\label{a3.1}
The following assertions hold true.
\begin{enumerate}[{\rm (1)}] 
\item Set $\calM=\fkm \calT+\calT_+$. For $n>0$, 
\begin{center}
$(\calS/\calM \calS)_n=0$ if and only if $I^{n+1}=QI^{n}$.
\end{center}
\item $\Ass_\calT \calS\subseteq \{\fkm \calT \}$. Hence either $\calS=0$ or $\dim \calS=d$.
\item $\ell_A (A/I^{n+1})=\rme_0{\cdot}\binom{n+d}{d}-(\rme_0-\ell_A (A/I)){\cdot}\binom{n+d-1}{d-1}- \ell_A(\calS_n)$ for all $n\ge 0$.
\item $\rme_1=\rme_0-\ell_A (A/I)+\ell_{\calT_\fkp}(\calS_\fkp)$, where $\fkp=\fkm \calT$.
\item 
\begin{enumerate}[{\rm (i)}] 
\item If $\calS=0$, then $G$ is a Cohen-Macaulay ring.
\item If $\calS\ne0$ and $\depth_\calT \calS<d$, $\depth G=\depth_\calT \calS-1$. 
\item Assume $\calS\ne 0$. Then $\depth_\calT \calS=d$ if and only if $\depth G\ge d-1$.
\end{enumerate}
\end{enumerate}
\end{prop}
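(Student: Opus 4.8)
\smallskip
\noindent\emph{Plan of proof.} The plan for (1)--(4) is to unravel the definitions together with standard facts about the parameter ideal $Q$ --- in particular that $\operatorname{gr}_Q(A)\cong(A/Q)[T_1,\dots,T_d]$ is a polynomial ring, so that $\ell_A(A/Q^{n+1})=\rme_0\binom{n+d}{d}$ and each $Q^n/Q^{n+1}$ is $A/Q$-free of rank $\binom{n+d-1}{d-1}$. For (1): since $Q\subseteq I$ one has $Q^jI^{n-j+1}\subseteq QI^n$, so the degree-$n$ component of $\calT_+\calS$ is the image of $QI^n$ in $I^{n+1}/IQ^n$; hence $\calS_n/(\calT_+\calS)_n\cong I^{n+1}/QI^n$ and $(\calS/\calM\calS)_n\cong I^{n+1}/(QI^n+\fkm I^{n+1})$, and Nakayama's lemma finishes it. For (2): $IQ^n$ is $\fkm$-primary, so $\calS_n=I^{n+1}/IQ^n$ has finite length over $A$, and hence every associated prime of the graded $\calT$-module $\calS$ contains $\fkm\calT$; a homogeneous-generator argument due to Vasconcelos rules out embedded primes, giving $\Ass_\calT\calS\subseteq\{\fkm\calT\}$, and since $\dim\calT/\fkm\calT=d$ this forces $\dim\calS=d$ once $\calS\ne0$. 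For (3): the defining sequence $0\to IQ^n\to I^{n+1}\to\calS_n\to0$ gives $\ell_A(\calS_n)=\ell_A(A/IQ^n)-\ell_A(A/I^{n+1})$, while from $\operatorname{gr}_Q(A)=(A/Q)[T_1,\dots,T_d]$ one computes $\ell_A(IQ^n/Q^{n+1})=\ell_A(I/Q)\binom{n+d-1}{d-1}$, hence $\ell_A(A/IQ^n)=\rme_0\binom{n+d}{d}-(\rme_0-\ell_A(A/I))\binom{n+d-1}{d-1}$; combining gives the stated formula. For (4): comparing (3) with the Hilbert polynomial of $I$ shows that the Hilbert polynomial of $\calS$ has leading term $(\rme_1-\rme_0+\ell_A(A/I))\binom{n+d-1}{d-1}$, and since $\Supp_\calT\calS=V(\fkm\calT)$ with $\calT/\fkm\calT$ a standard graded polynomial ring of multiplicity one, the associativity formula for multiplicities identifies this leading coefficient with $\ell_{\calT_\fkp}(\calS_\fkp)$, $\fkp=\fkm\calT$.

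For (5), the starting point is that $\calT=A[Qt]\cong A[T_1,\dots,T_d]/I_2\bigl(\begin{smallmatrix}a_1&\cdots&a_d\\ T_1&\cdots&T_d\end{smallmatrix}\bigr)$ is Cohen--Macaulay of dimension $d+1$ (Eagon--Northcott), and --- since $a_1,\dots,a_d\in I$, so these defining relations vanish modulo $I$ and modulo $Q$ --- that $\calT/I\calT\cong(A/I)[T_1,\dots,T_d]$ and $\operatorname{gr}_Q(A)\cong(A/Q)[T_1,\dots,T_d]$ are Cohen--Macaulay of dimension $d$. From $0\to I\calT\to\calT\to\calT/I\calT\to0$ one reads off $\depth_\calT I\calT=d+1$. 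I would then apply the depth lemma along the graded exact sequences of $\calT$-modules
\[
0\to I\calT\to I\calR\to\calS\to0,\qquad 0\to I\calR\to\calR\to G\to0,
\]
\[
0\to\calT\to\calR\to\calR/\calT\to0,\qquad 0\to(I/Q)[T_1,\dots,T_d](-1)\to\calR/\calT\to\calS(-1)\to0
\]
(the last one because $I^n/Q^n\twoheadrightarrow I^n/IQ^{n-1}$ has kernel $IQ^{n-1}/Q^n$), together with the canonical map of the first sequence into the third. This already yields $\depth G\ge\depth_\calT\calS-1$; and when $\calS=0$ it gives $I\calR=I\calT$ of depth $d+1$ and $\calR/\calT\cong(I/Q)[T_1,\dots,T_d](-1)$ of depth $d$, whence $\depth G=d$, which is (i).

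The sharp statements (ii) and the equivalence (iii) require more than the depth lemma: one has to track the connecting homomorphisms in local cohomology. Writing $s=\depth_\calT\calS$ and $\calM$ for the graded maximal ideal of $\calT$, the vanishings $\mathrm{H}^i_\calM(I\calT)=\mathrm{H}^i_\calM(\calT)=0$ for $i\le d$ force, when $s<d$, isomorphisms $\mathrm{H}^s_\calM(I\calR)\cong\mathrm{H}^s_\calM(\calS)$ and $\mathrm{H}^s_\calM(\calR)\cong\mathrm{H}^s_\calM(\calR/\calT)$ that are compatible with the map of exact sequences; one then identifies $\mathrm{H}^{s-1}_\calM(G)$ with $\ker\bigl(\mathrm{H}^s_\calM(\calS)\to\mathrm{H}^s_\calM(\calR/\calT)\bigr)$ and uses the fourth exact sequence --- after, if necessary, cutting $\calS$ by a general element $\ell t\in\calT_1$, which is a nonzerodivisor on $\calS$ by (2) --- to see that this kernel is nonzero exactly when $s<d$. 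Together with the inequality this gives $\depth G=\depth_\calT\calS-1$ for $\depth_\calT\calS<d$, and specializing to $s=d$ (resp.\ taking contrapositives) yields (iii). I expect this last step --- extracting the \emph{exact} value of $\depth G$, rather than just the inequality, from the local cohomology of the diagram above --- to be the main obstacle; the no-embedded-primes statement used in (2) is the other ingredient I would simply quote from \cite{V} (the whole proposition being \cite[Lemma~2.1, Proposition~2.2]{GNO}).
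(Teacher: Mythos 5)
You are attempting to prove a statement the paper itself does not prove: Proposition \ref{a3.1} is quoted verbatim from \cite[Lemma 2.1, Proposition 2.2]{GNO}, so there is no internal proof to compare with, only the standard arguments of that source, which your sketch largely reconstructs. Parts (1), (3), (4) are correct and essentially complete in outline: the identification $(\calS/\calM\calS)_n\cong I^{n+1}/(QI^n+\fkm I^{n+1})$ plus Nakayama, the computation of $\ell_A(A/IQ^n)$ via $IQ^n/Q^{n+1}\cong (I/Q)\otimes_{A/Q}Q^n/Q^{n+1}$ (using that $\gr_Q(A)$ is a polynomial ring over $A/Q$), and the associativity formula applied to a module supported on $V(\fkm\calT)$ are exactly the usual route. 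For (5), your four exact sequences and the Cohen--Macaulayness of $\calT$, $\calT/I\calT\cong (A/I)[T_1,\dots,T_d]$ and $\gr_Q(A)$ are the right toolkit; they do give (i) and the inequality $\depth G\ge \depth_\calT\calS-1$.

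There remain, however, two genuine gaps, and they sit precisely at the nontrivial points. First, in (2) the statement $\Ass_\calT\calS\subseteq\{\fkm\calT\}$ is the substantive claim (finite length of the $\calS_n$ only yields $\Supp_\calT\calS\subseteq V(\fkm\calT)$), and you simply quote it from \cite{V}; note that it is also used later, e.g.\ to know $\depth_\calT\calS\ge 1$ so that $\mathrm{H}^{s-1}$ makes sense in (5)(ii). Second, and more seriously, the reverse inequality in (5)(ii), i.e.\ $\mathrm{H}^{s-1}_{\calM}(G)\ne 0$ when $s=\depth_\calT\calS<d$, is left open: ``cutting $\calS$ by a general element of $\calT_1$ and using the fourth sequence'' is not yet an argument, since after cutting one must compare $\calS/\ell t\calS$ and $G$ modulo a superficial element with the Sally module and associated graded ring of the quotient ring, which requires comparison lemmas you do not state. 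The gap can in fact be closed inside your own framework: your identifications (using $\mathrm{H}^{s-1}_{\calM}(\calR)=0$, which follows since the third and fourth sequences give $\depth\calR\ge s$) yield $\mathrm{H}^{s-1}_{\calM}(G)\cong\ker\bigl(\mathrm{H}^{s}_{\calM}(\calS)\to \mathrm{H}^{s}_{\calM}(\calS)(-1)\bigr)$, the map being induced by the degree-lowering morphism $\calS\to\calS(-1)$, $I^{n+1}/IQ^n\to I^n/IQ^{n-1}$. If this kernel were zero, that map would be an injective $\calT$-endomorphism of the Artinian module $\mathrm{H}^{s}_{\calM}(\calS)$, hence surjective; surjectivity in each degree gives $\mathrm{H}^{s}_{\calM}(\calS)_{n-1}$ as an image of $\mathrm{H}^{s}_{\calM}(\calS)_n$ for every $n$, so nonvanishing would propagate to arbitrarily large degrees, contradicting that a graded Artinian module vanishes in degrees $\gg 0$ while $\mathrm{H}^{s}_{\calM}(\calS)\ne 0$. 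With that supplement (and a proof or precise reference for the unmixedness in (2)), your plan does establish the proposition; as written, these two steps are missing.
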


Note that Proposition \ref{a3.1} yields the result of Huneke and Ooishi, that is, $I^2=QI$ if and only if $\rme_1=\rme_0-\ell_A (A/I)$. It is equivalent to $\calS=0$. 
Like this, the structure of the Sally module determines the Hilbert function and the depth of $G$.
In the next section, we will classify the Hilbert function and the depth of $G$ through the structure of the Sally module.


\section{Main results}\label{Main results}

In this section, we maintain the assumptions and notations in Subsection \ref{subsection2.2}. Besides them, set 
\begin{center}
$B=\calT/\fkm \calT\cong k[X_1, X_2, \cdots, X_d]$ and $\fkp=\fkm \calT$,
\end{center}
where $X_1, X_2, \cdots, X_d$ denote indeterminates over the residue field $k$.

\begin{lem}\label{b3.1}
The following conditions are equivalent.
\begin{enumerate}[{\rm (1)}] 
\item $I^3=QI^2$ and $\fkm I^2\subseteq QI$.
\item $\calS=\calT \calS_1$ and $\fkm \calS=0$.
\end{enumerate}
When this is the case, $\calS$ is a torsionfree $B$-module of rank $\ell_{\calT_\fkp}(\calS_\fkp)$.
\end{lem}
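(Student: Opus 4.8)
The plan is to establish the equivalence $(1)\Leftrightarrow(2)$ first, and then deduce the torsionfree statement. For the equivalence, I would work separately with the two halves of each condition. Recall $\calS = I\calR/I\calT = \bigoplus_{n\ge 1} I^{n+1}/IQ^n$. The condition $\fkm\calS = 0$ is, degree by degree, the assertion $\fkm I^{n+1} \subseteq IQ^n$ for all $n\ge 1$. For $n=1$ this is exactly $\fkm I^2 \subseteq QI$; conversely, if $\fkm I^2 \subseteq QI$ then multiplying by $Q^{n-1}$ gives $\fkm I^2 Q^{n-1}\subseteq IQ^n$, and combined with the containment $I^{n+1}\subseteq I^2Q^{n-1}$ (which follows from $I^3 = QI^2$ by an easy induction, since $I^{n+1} = I^{n-1}\cdot I^2 = Q^{n-2}I^2\cdot I = Q^{n-2}\cdot QI^2 = Q^{n-1}I^2$) we obtain $\fkm I^{n+1}\subseteq IQ^n$. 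So, granting $I^3 = QI^2$, the conditions $\fkm I^2\subseteq QI$ and $\fkm\calS = 0$ are equivalent.

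Next I would handle the condition $\calS = \calT\calS_1$ versus $I^3 = QI^2$, using Proposition \ref{a3.1}(1). By that proposition, $(\calS/\calM\calS)_n = 0$ if and only if $I^{n+1} = QI^n$, where $\calM = \fkm\calT + \calT_+$. The module $\calS$ is generated in degrees $\ge 1$; the statement $\calS = \calT\calS_1$ means it is generated in degree exactly $1$, equivalently $(\calS/\calT_+\calS)_n = 0$ for all $n\ge 2$. I would argue: if $\fkm\calS = 0$, then $\calM\calS = \calT_+\calS$, so $(\calS/\calT_+\calS)_2 = (\calS/\calM\calS)_2 = 0 \iff I^3 = QI^2$, and moreover $I^3 = QI^2$ forces $I^{n+1} = QI^n$ for all $n\ge 2$ by the induction above, hence $(\calS/\calM\calS)_n = 0 = (\calS/\calT_+\calS)_n$ for all $n\ge 2$, i.e. $\calS = \calT\calS_1$. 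This gives $(1)\Rightarrow(2)$. For $(2)\Rightarrow(1)$: from $\fkm\calS = 0$ and $\calS = \calT\calS_1$ we get $\calM\calS = \calT_+\calS$ and $(\calS/\calM\calS)_2 = 0$, hence $I^3 = QI^2$ by Proposition \ref{a3.1}(1), and then $\fkm I^2\subseteq QI$ follows from the first paragraph.

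For the final assertion, once $(1)$–$(2)$ hold, $\fkm\calS = 0$ makes $\calS$ a module over $\calT/\fkm\calT = B \cong k[X_1,\dots,X_d]$, and $\calS = \calT\calS_1 = B\calS_1$ is finitely generated in degree one over the polynomial ring $B$. By Proposition \ref{a3.1}(2), $\Ass_\calT\calS \subseteq \{\fkm\calT\}$; since $\fkm\calT$ maps to the irrelevant maximal ideal's contraction, in fact $\Ass_B\calS = \{(0)\}$, which says precisely that $\calS$ is a torsionfree $B$-module (here I use that $\calS\ne 0$, for otherwise the rank statement is vacuous and there is nothing to prove). Finally the rank: localizing at $\fkp = \fkm\calT$ and using that $B_{(0)}$ is the fraction field of $B$, the rank of $\calS$ as a $B$-module equals $\ell_{B_{(0)}}(\calS\otimes_B B_{(0)}) = \ell_{\calT_\fkp}(\calS_\fkp)$, where the last equality holds because localizing $\calT$ at $\fkp = \fkm\calT$ and $B = \calT/\fkm\calT$ at $(0)$ give the same residue field and $\calS$ is already killed by $\fkm\calT$.

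The main obstacle I anticipate is purely bookkeeping: making sure the induction $I^3 = QI^2 \Rightarrow I^{n+1} = QI^n$ for all $n\ge 2$ is cleanly invoked, and tracking carefully the difference between the two gradings/quotients ($\calM\calS$ versus $\calT_+\calS$) so that Proposition \ref{a3.1}(1) is applied in exactly the situation where $\fkm\calS = 0$ collapses the two. There is no deep idea here beyond correctly combining Proposition \ref{a3.1} with the elementary ideal-theoretic manipulations; the torsionfreeness and rank computation are then immediate from part (2) of that proposition.
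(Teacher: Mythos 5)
Your proposal is correct and follows essentially the same route as the paper: the equivalence $I^3=QI^2\Leftrightarrow\calS=\calT\calS_1$ via Proposition \ref{a3.1}(1), the identification of $\fkm I^2\subseteq QI$ with the vanishing of $\fkm\calS$ through the degreewise description $\calS_n=I^{n+1}/IQ^n$, and torsionfreeness from $\Ass_\calT\calS\subseteq\{\fkm\calT\}$ in Proposition \ref{a3.1}(2). You simply spell out the details (the induction $I^{n+1}=Q^{n-1}I^2$, the graded generation argument, and the rank computation over $B$) that the paper leaves implicit.
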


\begin{proof}
The equivalence between $I^3=QI^2$ and $\calS=\calT \calS_1$ follows from Proposition \ref{a3.1} (1). 
The rest equivalence follows from the fact $\fkm I^2\subseteq QI$ if and only if $\fkm \calS_1=0$.
When this is the case, $\calS$ is a torsionfree $B$-module by Proposition \ref{a3.1}(2).
\end{proof}

Let $d=\dim A \ge 2$ and suppose that $I^2\ne QI$, $I^3=QI^2$, and $\fkm I^2\subseteq QI$. Set $\ell=\ell_{\calT_\fkp}(\calS_\fkp)>0$. By Corollary \ref{c2.4}, we then have a graded exact sequence 
\[
0 \to B(-1)^{\oplus(\ell-1)} \to \calS \to J(m) \to 0
\]
of $B$-modules, where $m$ is an integer and $J$ is a graded ideal of $B$.
We may assume that  $\height_B J \ge 2$ if $J\ne B$ since $B$ is a factorial domain. With these assumptions and notations, we have the following, which is the key of this section.

\begin{prop}\label{b3.2}
Let $d\ge 2$. Suppose that $I^2\ne QI$, $I^3=QI^2$, and $\fkm I^2\subseteq QI$. Let 
\[
0 \to B(-1)^{\oplus(\ell-1)} \to \calS \to J(m) \to 0 \eqno{(*)}
\]
be a graded exact sequence of $B$-modules such that $\height_B J\ge 2$.
Then 
\[
m=\ell_A(A/I)-\rme_0+\rme_1-\rme_2-1.
\]
In particular, $m$ is independent of the choice of $(*)$ and $-1\le m\le \ell-1$.
\end{prop}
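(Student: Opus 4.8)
The plan is to extract the value of $m$ by comparing Hilbert functions (Hilbert series) across the graded exact sequence $(*)$, using the formulas in Proposition~\ref{a3.1}. First I would record the Hilbert function of each term of $(*)$. Since $\calS = \bigoplus_{n\ge 1} I^{n+1}/IQ^n$ lives in positive degrees and $\calS_\fkp$ has length $\ell = \ell_{\calT_\fkp}(\calS_\fkp)$ (which by Lemma~\ref{b3.1} is the rank of $\calS$ as a $B$-module), the leading behaviour of $\ell_A(\calS_n)$ is $\frac{\ell}{(d-1)!}n^{d-1}+\cdots$; more precisely Proposition~\ref{a3.1}(3) gives $\ell_A(\calS_n)$ in closed form in terms of $\rme_0,\rme_1,\rme_2,\dots$ and $\ell_A(A/I)$, once one also uses the standard fact that $\rme_i$ for $i\ge 2$ can be read off the Sally module's Hilbert polynomial. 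For the free module $B(-1)^{\oplus(\ell-1)}$ the Hilbert function is $(\ell-1)\binom{n-1+d-1}{d-1}$, and for the twisted ideal $J(m)$ it is $\ell_A\big((B/J)_{n+m}\big)$ subtracted from $\binom{n+m+d-1}{d-1}$, i.e. $\dim_k B_{n+m} - \dim_k (B/J)_{n+m}$.

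The key step is then the additivity of Hilbert functions along $(*)$:
\[
\ell_A(\calS_n) = (\ell-1)\binom{n+d-2}{d-1} + \binom{n+m+d-1}{d-1} - \ell_A\big((B/J)_{n+m}\big),
\]
valid for all $n$. Because $\height_B J \ge 2$ (so $\dim B/J \le d-2$), the term $\ell_A((B/J)_{n+m})$ contributes only to lower-order terms, of degree $\le d-3$ in $n$; I expect its total sum (the "correction") to be governed by $\rme_2$. Concretely, I would compare the degree-$(d-1)$ and degree-$(d-2)$ coefficients on both sides: the degree-$(d-1)$ coefficients automatically match (both equal $\ell/(d-1)!$, consistent with $\rank_B\calS=\ell$), and the degree-$(d-2)$ coefficient comparison pins down $m$. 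Writing $\binom{n+m+d-1}{d-1}+(\ell-1)\binom{n+d-2}{d-1}$ as a polynomial in $n$, its sub-leading coefficient involves $m$ linearly, while the sub-leading coefficient of $\ell_A(\calS_n)$ from Proposition~\ref{a3.1}(3) is a known combination of $\rme_0,\rme_1,\ell_A(A/I)$ plus a $\rme_2$ contribution (the $\rme_2$ being exactly what distinguishes the Sally-module Hilbert polynomial from the naive one). Solving the resulting linear equation should yield
\[
m = \ell_A(A/I) - \rme_0 + \rme_1 - \rme_2 - 1.
\]

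The main obstacle is bookkeeping the Hilbert coefficients correctly: one must be careful that $\ell_A((B/J)_{n+m})$, though of low degree, does feed into the $\rme_2$ term, and conversely that no higher Hilbert coefficients $\rme_3,\dots$ enter at the order we are comparing. I would handle this by invoking the precise relationship (a consequence of Proposition~\ref{a3.1}(3), or of Vasconcelos/Itoh-type formulas) between the Hilbert series of $\calS$ and the $\rme_i$, namely that the Hilbert series of $\calS$ is $\big(\sum_{i\ge 2}(-1)^i\rme_i\binom{\text{-}}{\text{-}}\big)$-type data; comparing it termwise with the Hilbert series of the right-hand side of $(*)$ (which equals $(\ell-1)\frac{t}{(1-t)^d}$ plus the Hilbert series of $J(m)$) gives all the identities at once. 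Once $m$ is expressed as above, independence from the choice of $(*)$ is immediate since the right-hand side depends only on $I$ and $Q$; and the bounds $-1\le m\le \ell-1$ follow because $J(m)$ is a nonzero graded ideal so $J(m)_0$ can be nonzero only if $m\le 0$... more carefully: $\calS_n = 0$ for $n\le 0$ forces $J(m)_n=0$ and $B(-1)^{\oplus(\ell-1)}_n=0$ for $n\le 0$, giving $m\ge -1$ from $J(m)_0 = J_m$ and the fact that $J$ is a graded ideal contained in $B=k[X_1,\dots,X_d]$; the upper bound $m\le \ell-1$ follows from $\rme_2\ge 0$ (Narita~\cite{Na}) together with the already-established Northcott-type inequality $\rme_1 \le \rme_0 - \ell_A(A/I) + \ell$ coming from $\ell_{\calT_\fkp}(\calS_\fkp)=\ell$ in Proposition~\ref{a3.1}(4). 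I would present these two boundary arguments at the end as short remarks.
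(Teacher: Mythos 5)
Your central computation is correct and is essentially the paper's own argument: you use additivity of Hilbert functions along the four-term sequence $0 \to B(-1)^{\oplus(\ell-1)} \to \calS \to B(m) \to (B/J)(m) \to 0$, note that $\height_B J\ge 2$ forces the Hilbert polynomial of $B/J$ to have degree at most $d-3$, and compare sub-leading coefficients: from Proposition \ref{a3.1}(3),(4) one has $\ell_A(\calS_n)=\ell\binom{n+d-1}{d-1}-\rme_2\binom{n+d-2}{d-2}+(\text{lower order})$ for $n\gg0$, while the right-hand side of the sequence gives $\ell\binom{n+d-1}{d-1}+(m-\ell+1)\binom{n+d-2}{d-2}+(\text{lower order})$, so $\rme_2=\ell-1-m$, which is the stated formula (and correctly ignores $\rme_3,\dots$, which only affect lower orders). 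The independence of the choice of $(*)$ and the upper bound $m\le\ell-1$ via Narita's nonnegativity of $\rme_2$ are also exactly as in the paper.

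The genuine gap is the lower bound $-1\le m$. Your argument --- that $\calS_n=0$ for $n\le 0$ forces $J(m)_n=0$ for $n\le 0$, hence $m\ge -1$ --- is a non sequitur: the vanishing only says $J_j=0$ for all $j\le m$, which is compatible with arbitrarily negative $m$ (for instance $J=(X_1^2,X_2^2)$ satisfies $J_j=0$ for $j\le 1$, so nothing there rules out $m=-10$). Nor can the bound be recovered from the Hilbert-function identity: via $m=\ell-1-\rme_2$ it is equivalent to $\rme_2\le\ell$, which is precisely the inequality of Theorem \ref{c3.3} that this proposition is meant to establish, so assuming it would be circular. The missing ingredient is Lemma \ref{b3.1}: the hypotheses $I^3=QI^2$ and $\fkm I^2\subseteq QI$ give $\calS=\calT\calS_1$, i.e.\ $\calS$ is generated in degree one; hence its quotient $J(m)$ is generated in degree one, so $J=BJ_{m+1}$, and since $J$ has rank one in $(*)$ it is nonzero, whence $J_{m+1}\ne 0$ inside $B_{m+1}$, which forces $m+1\ge 0$. (This is how the paper argues: if $J=B$ then $m=-1$ outright, and if $J\subsetneq B$ then $J_0=0$ gives $m+1\ge 1$.) Since the lower bound is the part that feeds Theorem \ref{c3.3}, you should make this generation-degree argument explicit rather than the vanishing argument you sketched.
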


\begin{proof}
Note that $J=BJ_{m+1}$ and $J_{m+1}\ne0$ by an exact sequence $(*)$ and Lemma \ref{b3.1}. Hence $m=-1$ if $J=B$. Then $\calS\cong B(-1)^{\oplus \ell}$, whence $\rme_2=\ell=\ell_A(A/I)-\rme_0+\rme_1$ by Proposition \ref{a3.1} (3) and (4).

Suppose that $J\subsetneq B$. Then $m+1\ge 1$. Consider the exact sequence
\[
0 \to B(-1)^{\oplus(\ell-1)} \to \calS \to B(m) \to (B/J)(m) \to 0
\]
of graded $B$-modules. Thus
\[
\ell_A (\calS_n)+\ell_A((B/J)_{m+n})=(\ell-1){\cdot}\ell_A(B_{n-1})+\ell_A(B_{n+m})
\] 
for all $n\in \mathbb{Z}$. Note that the degree of $\ell_A((B/I)_{m+n})$ is at most $d-3$ since the height of $J$ is at least two. Therefore
{\small
\begin{align*}
\ell_A (\calS_n) &=(\ell-1)\left\{  \binom{n+d-1}{d-1} - \binom{n+d-2}{d-2}\right\} + \binom{n+d-1}{d-1}+m\binom{n+d-2}{d-2} + (\text{lower term})\\
 &= \ell{\cdot} \binom{n+d-1}{d-1}- (\ell-1-m){\cdot}\binom{n+d-2}{d-2}+ (\text{lower term}).
\end{align*}
}
Hence, by Proposition \ref{a3.1}(3) and (4), we have the equality 
\[
\rme_2=\ell-1-m=\rme_1-\rme_0+\ell_A (A/I)-1 -m,
\]
which is an non-negative integer by Narita's theorem \cite{Na}.
\end{proof}

The following is a direct consequence of Proposition \ref{b3.2}.

\begin{thm}\label{c3.3}
Suppose that $d\ge 2$.
If $I^3=QI^2$ and $\fkm I^2\subseteq QI$, then we have the inequality
\[
\mathrm{e}_1\ge \mathrm{e}_0- \ell_A (A/I) + \mathrm{e}_2
\]
\end{thm}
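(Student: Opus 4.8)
The plan is to derive Theorem \ref{c3.3} directly from Proposition \ref{b3.2}, treating the case $I^2 = QI$ separately from the case $I^2 \neq QI$.

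First I would dispose of the trivial case. If $I^2 = QI$, then $\calS = 0$ by Proposition \ref{a3.1}(1), so $\rme_1 = \rme_0 - \ell_A(A/I)$ by Proposition \ref{a3.1}(4), and $\rme_2 = 0$ since the Hilbert polynomial has degree coming only from $\rme_0$ and $\rme_1$; thus the asserted inequality holds as an equality. (One may also invoke Narita's theorem $\rme_2 \ge 0$ to get the inequality without computing $\rme_2$ exactly.)

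Now suppose $I^2 \neq QI$. Since $d \ge 2$, $I^3 = QI^2$, and $\fkm I^2 \subseteq QI$, Lemma \ref{b3.1} tells us $\calS = \calT\calS_1$, $\fkm\calS = 0$, and $\calS$ is a torsionfree $B$-module of rank $\ell = \ell_{\calT_\fkp}(\calS_\fkp) > 0$. Hence Corollary \ref{c2.4} furnishes a graded exact sequence
\[
0 \to B(-1)^{\oplus(\ell-1)} \to \calS \to J(m) \to 0
\]
of $B$-modules with $J$ a graded ideal of $B$, and since $B$ is factorial we may arrange $\height_B J \ge 2$ whenever $J \neq B$. Proposition \ref{b3.2} then gives $m = \ell_A(A/I) - \rme_0 + \rme_1 - \rme_2 - 1$, and the same proposition records that $m \ge -1$ (via Narita's non-negativity of $\rme_2$, applied after rewriting $\rme_2 = \ell - 1 - m$ in the case $J \subsetneq B$, or directly $m = -1$ when $J = B$). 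Substituting $m \ge -1$ into the formula for $m$ yields $\ell_A(A/I) - \rme_0 + \rme_1 - \rme_2 - 1 \ge -1$, i.e. $\rme_1 \ge \rme_0 - \ell_A(A/I) + \rme_2$, which is exactly the claim.

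Since all the substantive work is already carried out in Proposition \ref{b3.2}, there is essentially no obstacle here; the only point requiring a little care is to make sure the degenerate case $I^2 = QI$ (excluded from the hypotheses of Proposition \ref{b3.2} but permitted in the theorem) is handled, and that one correctly reads off the inequality $m \ge -1$ from the statement of Proposition \ref{b3.2} rather than re-deriving it.
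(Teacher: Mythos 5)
Your proof is correct and follows essentially the same route as the paper: dispose of the case $I^2=QI$ (where $\rme_1=\rme_0-\ell_A(A/I)$ and $\rme_2=0$), then read the inequality off from $-1\le m=\ell_A(A/I)-\rme_0+\rme_1-\rme_2-1$ in Proposition \ref{b3.2}. The only cosmetic difference is that you justify the degenerate case via the vanishing of the Sally module and Proposition \ref{a3.1} rather than citing Huneke--Ooishi directly, which amounts to the same thing.
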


\begin{proof}
If $I^2=QI$, then $\mathrm{e}_1= \mathrm{e}_0- \ell_A (A/I)$ and $\mathrm{e}_2=0$ by \cite{H, O}. Hence we may assume that $I^2\ne QI$. In this case, our assertion follows from the inequality $-1\le m =\ell_A(A/I)-\rme_0+\rme_1-\rme_2-1$ by Proposition \ref{b3.2}.
\end{proof}

The bounds in Proposition \ref{b3.2} and Theorem \ref{c3.3} are sharp. Let us note two examples.

\begin{ex}
Let $A=k[[X, Y]]$ be a formal power series ring over a field $k$ and $\ell$ a positive integer. Set 
\begin{center}
$Q=(X^{2\ell+2}, Y^{2\ell+2})$ and $I=Q + (XY^{2\ell+1}, X^3Y^{2\ell-1}, \dots , X^{2i+1}Y^{2\ell-2i+1}, \dots , X^{2\ell+1}Y)$.
\end{center}
Then $I^3=QI^2$, $(X, Y)I^2\subseteq QI$, and 
$\ell_A (A/I^{n+1})=4(\ell+1)^2\binom{n+2}{2} -(2\ell^2+3\ell + 1)\binom{n+1}{1}$ for all $n\ge 1$.
Thus $\rme_1=\rme_0 - \ell_A (A/I) + \ell$ and $\rme_2=0$. We furthermore have the graded exact sequence 
\[
0\to B(-1)^{\oplus (\ell-1)}\to \calS_Q(I) \to (X, Y)^{\ell}B (\ell-1) \to 0
\]
as $B=k[X, Y]$-modules and $\depth G(I)=0$.
\end{ex}

\begin{proof}
It is routine to show that $I^3=QI^2$, $(X, Y)I^2\subseteq QI$.
It is also easy to show that $I^n=\ol{I^n}=(X, Y)^{2(\ell+1)n}$ for all $n\ge 2$, where $\ol{I^n}$ denotes the integral closure of $I^n$. Hence 
\begin{align*}
\ell_A (A/I^{n+1})&=n(\ell+1)\{ 2n(\ell + 1) + 1\}\\
&=4(\ell+1)^2\binom{n+2}{2} -(2\ell^2+3\ell + 1)\binom{n+1}{1}
\end{align*}
for all $n\ge 1$. The rank of the Sally module $\calS=\calS_Q(I)$ is 
\[
\rme_1 (I) - \rme_0 (I) + \ell_A (A/I)=(2\ell^2+3\ell + 1) - 4(\ell+1)^2 + (2\ell^2 + 4\ell +3)=\ell.
\]
Thus we have the graded exact sequence 
\[
0\to B(-1)^{\oplus (\ell-1)}\to \calS \to J(m) \to 0
\]
of $B$-modules for some graded ideal $J$ and some integer $-1\le m\le \ell-1$. We furthermore have 
$\ell_A (J_{m+1})=\ell_A (\calS_1)-(\ell-1)=2\ell-(\ell-1)=\ell+1$. Thus $m=\ell-1$ and $J=(X, Y)^{\ell}$ since $\ell_A (J_{m+1})\le \ell_A (B_{m+1}) \le \ell_A (B_{\ell})=\ell+1$.
Hence the depth of the associated graded ring is zero by Proposition \ref{a3.1}(5)(ii).
\end{proof}

\begin{ex}
Let $A=k[[X, Y]]$ be a formal power series ring over a field $k$. 
Set $Q=(X^5, Y^5)$ and $I=Q + (X^2Y^3, X^3Y^2)$. Then ${I}^3=Q{I}^2$, $\fkm {I}^2\subseteq QI$, and $\ell_A ({I}^{n+1}/Q^{n}I)=2n$ for all $n\ge 1$. Hence, $\calS_{Q} (I) \cong k[X, Y](-1)^{\oplus 2}$.

\end{ex}

\begin{proof}
Since ${I}^3=Q{I}^2$ and $\fkm {I}^2\subseteq QI$, there is a surjection $k[X, Y](-1)^{\oplus 2} \to \calS_{Q} (I)$. Therefore we have the isomorphism since the kernel is zero by $\ell_A ({I}^{n+1}/Q^{n}I)=2n$ for all $n\ge 1$.
\end{proof}

Examples that $-1<m<\ell_A(A/I)-\rme_0+\rme_1-\rme_2-1$ also exist. We will see it later (Example \ref{b3.8}). 
Here, let us give some applications of Proposition \ref{b3.2}. In what follows, we always assume that $\dim A \ge 2$.

\begin{cor}\label{b3.4}
Suppose that $I^3=QI^2$ and $\fkm I^2\subseteq QI$. Let $\ell$ denote the rank of the Sally module as $B$-module.
Then we have the following.
\begin{enumerate}[{\rm (1)}] 
\item The following conditions are equivalent.
\begin{enumerate}[{\rm (i)}] 
\item $\mathrm{e}_1= \mathrm{e}_0 - \ell_A (A/I) + \mathrm{e}_2.$
\item $\calS\cong B(-1)^{\oplus \ell}$.
\item $\depth G\ge d-1$.
\end{enumerate}
When this is the case, 
\begin{center}
$\ell_A (A/I^{n+1})=\rme_0{\cdot}\binom{n+d}{d} -\rme_1{\cdot}\binom{n+d-1}{d-1}+\ell{\cdot}\binom{n+d-2}{d-2}$ for all $n\ge 0$.
\end{center}
\item $\mathrm{e}_1= \mathrm{e}_0- \ell_A (A/I) + \mathrm{e}_2 + 1$ if and only if 
\[
0\to B(-1)^{\oplus (\ell-1)}\to \calS \to (X_1, X_2, \dots, X_c)B \to 0
\]
is exact as graded $B$-modules, where $2\le c=\ell_A(I^2/QI)-\ell+1\le d$.
When this is the case, $\depth G=d-c$ and 

\begin{flushleft}
{\small
\[\ell_A (A/I^{n+1})=\begin{cases}
\rme_0{\cdot}\binom{n+d}{d} -\rme_1{\cdot}\binom{n+d-1}{d-1}+(\ell-1){\cdot}\binom{n+d-2}{d-2}+ \binom{n+d-c-1}{d-c-1} \text{ for all $n\ge 0$} & \text{if $c<d$}.\\
\rme_0{\cdot}\binom{n+d}{d} -\rme_1{\cdot}\binom{n+d-1}{d-1}+(\ell-1){\cdot}\binom{n+d-2}{d-2} \text{ for all $n\ge 1$} & \text{if $c=d$}.
\end{cases}\]
}
\end{flushleft}
\end{enumerate}
\end{cor}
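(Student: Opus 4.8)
The strategy is to run the Bourbaki sequence $(*)$ of Proposition \ref{b3.2} backwards: once we know the shape of the graded ideal $J$ of $B=k[X_1,\dots,X_d]$, we read off both the depth of $\calS$ (hence of $G$, via Proposition \ref{a3.1}(5)) and the length function $\ell_A(\calS_n)$ (hence the Hilbert function, via Proposition \ref{a3.1}(3)). I would first dispose of part (1). By Proposition \ref{b3.2}, $\rme_1=\rme_0-\ell_A(A/I)+\rme_2$ is exactly the statement $m=-1$, which by the proof of that proposition forces $J=B$ and $\calS\cong B(-1)^{\oplus\ell}$; this is condition (ii). Conversely $\calS\cong B(-1)^{\oplus\ell}$ is a free graded $B$-module, so $\depth_\calT\calS=\depth B=d$, and then Proposition \ref{a3.1}(5)(iii) gives $\depth G\ge d-1$, which is (iii). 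For (iii) $\Rightarrow$ (i): if $\depth G\ge d-1$ then $\depth_\calT\calS=d$ by Proposition \ref{a3.1}(5)(iii), so in the exact sequence $(*)$ the submodule $B(-1)^{\oplus(\ell-1)}$ and $\calS$ both have depth $d$, forcing $\depth_B J(m)\ge d-1$; since $J$ is an ideal of the $d$-dimensional ring $B$ with $\height_B J\ge 2$ whenever $J\ne B$, a proper such $J$ has $\depth_B B/J\le d-2$ and hence $\depth_B J\le d-1$ with equality only in degenerate situations — more cleanly, $B/J$ would have dimension $\le d-2$ and positive depth forces $\ell_A((B/J)_n)$ to grow, contradicting the finiteness built into the argument; the cleanest route is simply: $\depth_\calT\calS=d$ together with $(*)$ gives $\depth_B(B/J)(m)\ge d-1$, impossible for a proper ideal of height $\ge 2$, so $J=B$ and we are back to $m=-1$, i.e. (i). The Hilbert function formula then drops out of Proposition \ref{a3.1}(3) applied to $\calS\cong B(-1)^{\oplus\ell}$, using $\ell_A(\calS_n)=\ell\binom{n+d-2}{d-1}$ and the binomial identity $\binom{n+d-1}{d-1}-\binom{n+d-2}{d-2}\cdot(\text{bookkeeping})$, together with $\rme_1=\rme_0-\ell_A(A/I)+\ell$.

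For part (2), the equality $\rme_1=\rme_0-\ell_A(A/I)+\rme_2+1$ is, by Proposition \ref{b3.2}, the statement $m=0$. When $m=0$ the exact sequence $(*)$ reads $0\to B(-1)^{\oplus(\ell-1)}\to\calS\to J\to 0$ with $J$ a graded ideal generated in degree one (since $J=BJ_{m+1}=BJ_1$ by Lemma \ref{b3.1}) and $\height_B J\ge 2$. A graded ideal of $B$ generated by linear forms is, after a linear change of the $X_i$, of the form $(X_1,\dots,X_c)B$; the height condition gives $c\ge 2$. To pin down $c$, evaluate in degree one: $\ell_A(\calS_1)=(\ell-1)\cdot 0+\ell_A(J_1)=(\ell-1)+c$ — wait, rather $\ell_A(\calS_1)=\ell_A(B(-1)^{\oplus(\ell-1)}_1)+\ell_A(J_1)=(\ell-1)+c$, and since $\ell_A(\calS_1)=\ell_A(I^2/QI)$ this yields $c=\ell_A(I^2/QI)-\ell+1$, as claimed. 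Conversely, if such a sequence is exact then $m=0$ by the degree-one count just run in reverse (or directly by Proposition \ref{b3.2}, since $m$ is independent of the chosen sequence), giving the coefficient equality.

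The depth and Hilbert-function statements in (2) follow from analyzing $\calS$ via the two exact sequences $0\to B(-1)^{\oplus(\ell-1)}\to\calS\to (X_1,\dots,X_c)B\to 0$ and $0\to(X_1,\dots,X_c)B\to B\to B/(X_1,\dots,X_c)\to 0$. Since $B/(X_1,\dots,X_c)\cong k[X_{c+1},\dots,X_d]$ is a polynomial ring in $d-c$ variables, it has depth $d-c$, so $(X_1,\dots,X_c)B$ has depth $d-c+1$ when $c<d$ and depth $d$ when $c=d$ (where $B/(X_1,\dots,X_d)\cong k$); combining with $\depth B(-1)^{\oplus(\ell-1)}=d$ in the first sequence and the depth lemma gives $\depth_\calT\calS=d-c+1$ when $c<d$ and $=d$ when $c=d$, so $\depth G=\depth_\calT\calS-1=d-c$ by Proposition \ref{a3.1}(5)(ii) (when $c\ge 2$ this depth is $<d$, so (ii) applies; when $c=d=2$ one gets $\depth G=0$ consistently). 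For the Hilbert function, compute $\ell_A(\calS_n)$ from the first exact sequence: $\ell_A(\calS_n)=(\ell-1)\ell_A(B_{n-1})+\ell_A(B_n)-\ell_A((B/(X_1,\dots,X_c))_n)$, where $\ell_A((B/(X_1,\dots,X_c))_n)=\binom{n+d-c-1}{d-c-1}$ for $d-c\ge 1$ and equals $0$ for $n\ge 1$ when $c=d$; plug into Proposition \ref{a3.1}(3) and simplify with the standard binomial identities, using $\rme_1=\rme_0-\ell_A(A/I)+\ell$ once more.

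The main obstacle I anticipate is not any single hard computation but the bookkeeping of the implication (iii) $\Rightarrow$ (i) in part (1): one must rule out a proper height-$\ge 2$ ideal $J$ with $\depth_B B/J$ large enough to keep $\depth_\calT\calS=d$. The clean resolution is to observe that $\dim_B B/J\le d-2$ forces $\ell_A((B/J)(m)_n)$ to be eventually a polynomial of degree $\le d-3$, while the exact sequence then pins $\depth_B J\le\dim B/J+1\le d-1$ with strict issues — but the slickest argument is simply that $\depth_\calT\calS=d$ applied to $(*)$ forces the quotient $J(m)$ to be a $d$-dimensional torsionfree-with-depth-$\ge d-1$ graded ideal, and the only such ideal of $B$ is $B$ itself (a proper monomial-free reduction: any proper ideal $J$ has $\mathrm{grade}_B J\le\height_B J$, and one checks $\depth_B J=\min\{d,\,\depth_B B/J+1\}\le d-1$ once $\height J\ge 2$ since then $\mathrm{depth}_B B/J \le d-2$). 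I would state this as a one-line lemma-style remark and move on.
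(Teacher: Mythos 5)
Your treatment of (i)$\Rightarrow$(ii), (ii)$\Rightarrow$(iii), and your identification of $J=(X_1,\dots,X_c)B$ with $c=\ell_A(I^2/QI)-\ell+1$ in part (2) all match the paper. The genuine gap is your implication (iii)$\Rightarrow$(i). From $\depth_{\calT}\calS=d$ and the sequence $(*)$, the depth lemma gives only $\depth_B J(m)\ge d-1$ (not $\ge d$, and certainly not $\depth_B (B/J)(m)\ge d-1$, which does not follow at all), and your claim that a proper graded ideal of height $\ge 2$ cannot have depth $\ge d-1$ is false: $J=(X_1,X_2)$ has $\depth_B J=d-1$, and its Koszul sequence $0\to B(-2)\to B(-1)^{\oplus 2}\to J\to 0$ even shows that an extension of such an ideal by a free module can have free (depth $d$) middle term, so no contradiction is available from depth counting alone. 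The correct closing move, which is what the paper does in one line via Proposition \ref{a3.1}(5)(iii), is: since $\fkm\calS=0$, $\calS$ is a $B$-module, so $\depth_{\calT}\calS=d$ means $\calS$ is maximal Cohen--Macaulay over the polynomial ring $B$, hence free by Auslander--Buchsbaum; being generated in degree one (Lemma \ref{b3.1}) it must be $B(-1)^{\oplus\ell}$, i.e.\ (iii)$\Rightarrow$(ii), and then (i) follows by the length computation you already carried out. (Equivalently, once $\calS$ is free, the injection $B(-1)^{\oplus(\ell-1)}\to\calS$ is a matrix of scalars, so its cokernel $J(m)$ is free of rank one, forcing $J=B$ by the height hypothesis.)

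In part (2) there are two further slips. First, $(X_1,\dots,X_d)B$ does not have depth $d$: from $0\to J\to B\to k\to 0$ its depth is $1=d-c+1$ with $c=d$, so the formula $\depth_B J=d-c+1$ holds uniformly and your case split (and the ensuing inconsistency where you nevertheless apply Proposition \ref{a3.1}(5)(ii)) should be removed. Second, when $\depth_B J=d-1$ (i.e.\ $c=2$), the bare depth lemma does not yield $\depth_{\calT}\calS=d-c+1$: as the Koszul example above shows, the middle term of such an extension can have strictly larger depth. The paper closes this by observing that $\calS$ is not $B$-free (otherwise part (1) would give $\rme_1=\rme_0-\ell_A(A/I)+\rme_2$, contradicting the assumed equality with $+1$), so $\depth_{\calT}\calS<d$ and then the depth lemma (or the local cohomology sequence) pins $\depth_{\calT}\calS=d-c+1$; you need to add this non-freeness observation, which is available to you from your part (1). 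With these repairs your argument coincides with the paper's.
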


\begin{proof}
(1) Note that $\calS$ is a free $B$-module if and only if $\depth G\ge d-1$ by Proposition \ref{a3.1}(5)(iii). 
We have only to show the equivalence of (i) and (ii).
If $\rme_2 = \ell_A(A/I)-\rme_0+\rme_1=\ell$, then $m=-1$ and $I=B$ by Proposition \ref{b3.2}. Hence the exact sequence $(*)$ splits, thus $\calS\cong B(-1)^{\oplus \ell}$. If $\calS\cong B(-1)^{\oplus \ell}$, then $\ell_A (\calS_n)=\ell{\cdot} \left\{  \binom{n+d-1}{d-1} - \binom{n+d-2}{d-2}\right\}$ for all $n\ge 0$, whence we have the assertions by Proposition \ref{a3.1}(3).

(2) If $\rme_2 = \ell_A(A/I)-\rme_0+\rme_1-1$, then $m=0$ by Proposition \ref{b3.2}. Hence $J=\calT J_1\cong (X_1, X_2, \dots, X_c)B$, where $c=\ell_A(I^2/QI)-\ell+1$. If $c=1$, then $\calS\cong B(-1)^{\oplus \ell}$ and $\rme_2= \ell_A(A/I)-\rme_0+\rme_1$ by (1), which is a contradiction. Thus $c\ge 2$. Notice that $\calS$ is not a Cohen-Macaulay $\calT$-module since $\calS$ is not $B$-free, whence $\depth_\calT \calS=d-c+1$ by depth lemma. 
The rest assertions follow from Proposition \ref{a3.1}. The converse is now clear.
\end{proof}

\begin{cor}{\rm (cf. \cite[Theorem 3.6]{CPR})}\label{d3.7}
Suppose that $I^3=QI^2$ and $\fkm I^2\subseteq QI$. 
Assume that either 
\begin{enumerate}[{\rm (1)}] 
\item $I$ is an integrally closed ideal or
\item $d=2$ and $I=\tilde{I}$, where $\tilde{I}$ denotes the Ratliff-Rush closure of $I$ {\rm (\cite[Chapter VIII, Notation]{Mc})}.
\end{enumerate}

Then $\calS\cong B(-1)^{\oplus \ell}$, hence $\depth G\ge d-1$ and 
\begin{center}
$\ell_A (A/I^{n+1})=\rme_0{\cdot}\binom{n+d}{d} -\rme_1{\cdot}\binom{n+d-1}{d-1}+\ell{\cdot}\binom{n+d-2}{d-2}$ for all $n\ge 0$.
\end{center}
\end{cor}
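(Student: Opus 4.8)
The plan is to prove that $\calS$ is a free $B$-module and then invoke Corollary~\ref{b3.4}(1). If $I^2=QI$ there is nothing to prove, since then $\calS=0$, $\ell=0$, $G$ is Cohen--Macaulay by \cite{H, O}, and the displayed identity holds; so assume $I^2\neq QI$. By Lemma~\ref{b3.1} we have $\fkm\calS=0$ and $\calS=\calT\calS_1$, hence $\calS=B\calS_1$, where $\calS_1=I^2/QI$ is a $k$-vector space of dimension $\ell_A(I^2/QI)$; fixing a $k$-basis of $\calS_1$ produces a graded surjection $\varepsilon\colon B(-1)^{\oplus\ell_A(I^2/QI)}\twoheadrightarrow\calS$. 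The whole point is to show that $\varepsilon$ is injective: once this is known, $\calS\cong B(-1)^{\oplus\ell_A(I^2/QI)}$ is $B$-free, comparing ranks with Lemma~\ref{b3.1} forces $\ell_A(I^2/QI)=\ell$, so $\calS\cong B(-1)^{\oplus\ell}$, and then $\depth G\geq d-1$ and the Hilbert-function formula are exactly Corollary~\ref{b3.4}(1).

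To analyze $\ker\varepsilon$, fix $n\geq1$ and write $Q=(a_1,\dots,a_d)$. Since $I^{n+1}=Q^{n-1}I^2$ for $n\geq1$ (immediate from $I^3=QI^2$), a degree-$n$ element of $\ker\varepsilon$ is represented by a sum $\sum_{|\alpha|=n-1}c_\alpha a^\alpha$ with $c_\alpha\in I^2$ lying in $IQ^n$, and one must deduce that every $c_\alpha$ lies in $QI$ (the case $n=1$ is trivial). Both hypotheses give $I=\widetilde I$, the Ratliff--Rush closure: this is assumed in case (2), and in case (1) it follows from $\widetilde I\subseteq\overline I=I$; consequently $I^2:a_i=I$ for each superficial generator $a_i$, because $I^2:a_i\subseteq\bigcup_k(I^{k+1}:a_i^k)=\widetilde I=I$.

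In case (1) I would conclude as follows: $Q$ is generated by a regular sequence, so $\gr_Q(A)\cong(A/Q)[T_1,\dots,T_d]$ and $Q^{n-1}/Q^n$ is $A/Q$-free on the images of the monomials $a^\alpha$ ($|\alpha|=n-1$); reading $\sum c_\alpha a^\alpha\in IQ^n\subseteq Q^n$ in $Q^{n-1}/Q^n$ forces $c_\alpha\in Q$ for every $\alpha$, and then the Huneke--Itoh intersection theorem $\overline{I^2}\cap Q=Q\overline I$ gives $c_\alpha\in Q\cap I^2\subseteq Q\overline I=QI$ since $I=\overline I$. In case (2), where $d=2$ and $Q=(a_1,a_2)$, I would use the ``scroll'' free resolution $0\to A^{\oplus(n-1)}\xrightarrow{M}A^{\oplus n}\to Q^{n-1}\to0$ of $Q^{n-1}$ attached to the regular sequence $a_1,a_2$ (Hilbert--Burch, the maximal minors of $M$ generating the grade-two ideal $Q^{n-1}$), with $M$ having columns $a_2e_j-a_1e_{j+1}$. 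Writing the given element $\sum_{j=0}^{n-1}c_j a_1^{n-1-j}a_2^j\in IQ^n$ out in the degree-$n$ monomials with coefficients in $I$, subtracting, and identifying the resulting relation among $a_1^{n-1},\dots,a_2^{n-1}$ with a combination $\sum_j w_j(a_2e_j-a_1e_{j+1})$ of the columns of $M$, one gets $c_0=a_1e_0+a_2w_0$ with $c_0\in I^2$, so $w_0\in I^2:a_2=I$ and $c_0\in QI$; feeding this back, the same step yields $w_1\in I$ and $c_1\in QI$, and an easy induction then gives $c_2,\dots,c_{n-1}\in QI$. Either way $\varepsilon$ is injective, which finishes the proof.

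I expect the main obstacle to be the syzygy bookkeeping in case (2) --- making the ``write out, subtract, and match'' step precise and checking the induction for every $n$ --- along with pinning down that the Huneke--Itoh theorem is applicable as stated in case (1); by contrast the reduction to the injectivity of $\varepsilon$ and the concluding appeal to Corollary~\ref{b3.4}(1) are formal.
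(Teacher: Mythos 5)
Your proposal is correct in substance, but it takes a genuinely different route from the paper. The paper's own proof is a two-line argument: under the hypotheses, Theorem~\ref{c3.3} gives $\rme_1\ge \rme_0-\ell_A(A/I)+\rme_2$, while Itoh (integrally closed case) and Sally ($d=2$, $I=\tilde I$) give the opposite inequality $\rme_2\ge \rme_1-\rme_0+\ell_A(A/I)$; the resulting equality is condition (i) of Corollary~\ref{b3.4}(1), which yields $\calS\cong B(-1)^{\oplus\ell}$, $\depth G\ge d-1$, and the Hilbert function. Thus the paper trades on the interplay of the two Hilbert-coefficient inequalities (this is exactly the point advertised in the introduction), whereas you prove freeness of $\calS$ directly, by showing the tautological surjection $B(-1)^{\oplus \ell_A(I^2/QI)}\twoheadrightarrow\calS$ is injective, and then only use the easy implication of Corollary~\ref{b3.4}(1). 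Your route imports different external inputs: the Huneke--Itoh intersection theorem $\overline{Q^{2}}\cap Q=Q\overline{Q}$ in case (1) (legitimate, though not in the paper's bibliography), and superficial/Ratliff--Rush colon lemmas plus the explicit resolution of $Q^{n-1}$ in case (2); in exchange it avoids citing the Itoh--Sally inequalities and gives a structural explanation of why $\calS$ is free, independent of the Bourbaki-sequence machinery of Proposition~\ref{b3.2}. One point you should shore up in case (2): the equality $\bigcup_k(I^{k+1}:a_i^k)=\tilde I$ fails for arbitrary $a_i\in I$ --- only the inclusion $\subseteq\tilde I$ is needed, and it requires $a_i$ to be a superficial nonzerodivisor (via $I^{m+1}:a_i=I^m$ for $m\gg0$), while the given generators of $Q$ need not be superficial. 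Since the residue field is infinite, a general element of $Q$ is superficial for $I$ (its initial forms cannot all lie in a relevant associated prime of $G$, as $Q$ is a reduction), so you may replace the generating set of $Q$ so that the generator $a_2$ used in your syzygy computation is superficial; with that adjustment, and with the bookkeeping $c_j=e_j+a_2w_j-a_1w_{j-1}$, $w_j\in I^2:a_2=I$ carried out as you indicate, the argument goes through.
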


\begin{proof}
This is a direct consequence of Corollary \ref{b3.4}(1) since $\rme_2\ge \ell_A(A/I)-\rme_0+\rme_1$ by \cite{I, S2}.
\end{proof}

Note that Corso, Polini, and Rossi \cite[Theorem 3.6]{CPR} do not assume that $\fkm I^2\subseteq QI$, and we have no example which does not hold the assertion of Theorem \ref{c3.3} without assumption $\fkm I^2\subseteq QI$. 

Next we determine the Hilbert function from ideal conditions.
The following is a continuation of Sally and Goto-Nishida-Ozeki (\cite{S} and \cite{GNO}).


\begin{thm}\label{b3.6}
The following conditions are equivalent.
\begin{enumerate}[{\rm (1)}] 
\item $I^3=QI^2$, $\fkm I^2\subseteq QI$, $\ell_A (I^2/QI)=3$, and $\ell_A(I^3/Q^2I)<3d$.
\item The graded minimal $B$-free resolution of the Sally module is either
\begin{enumerate}[{\rm (i)}] 
\item $0\to B(-2)\to B(-1)^{\oplus 3}\to \calS\to 0$ or
\item $0\to B(-3) \to B(-2)^{\oplus 3}\to B(-1)^{\oplus 3}\to \calS\to 0$,

i.e. $\calS\cong (X_1, X_2, X_3)B$.
\end{enumerate}
Here, the case {\rm (ii)} only occurs when $d\ge 3$.
\end{enumerate}
When this is the case, $\mathrm{e}_1= \mathrm{e}_0- \ell_A (A/I) + \mathrm{e}_2 + 1$ and we have the following.

\begin{enumerate}[{\rm (i)}] 
\item $\ell_A(I^3/Q^2I)=3d-1$, $\depth G=d-2$, $\rme_1=\rme_0-\ell_A (A/I)+2$, and
\begin{center}
$\ell_A (A/I^{n+1})=\rme_0{\cdot}\binom{n+d}{d} -\rme_1{\cdot}\binom{n+d-1}{d-1}+\binom{n+d-2}{d-2} - (-1){\cdot}\binom{n+d-3}{d-3}$ for all $n\ge 0$.
\end{center}
\item $\ell_A(I^3/Q^2I)=3d-3$, $\depth G=d-3$, $\rme_1=\rme_0-\ell_A (A/I)+1$, and
\begin{center}
$\ell_A (A/I^{n+1})=\rme_0{\cdot}\binom{n+d}{d} -\rme_1{\cdot}\binom{n+d-1}{d-1}+\binom{n+d-4}{d-4}$ for all $n\ge 0$ if $d\ge 4$.
\end{center}
\end{enumerate}


\end{thm}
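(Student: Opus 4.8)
The plan is to prove the equivalence $(1)\Leftrightarrow(2)$ and then read off the numerical consequences. The essential tool is Proposition \ref{b3.2} together with the Bourbaki sequence $(*)$. Assuming $(1)$, since $\fkm I^2\subseteq QI$ and $I^3=QI^2$ with $\ell=\ell_{\calT_\fkp}(\calS_\fkp)$, Lemma \ref{b3.1} tells us $\calS$ is a torsionfree $B$-module of rank $\ell$, and the total minimal generation is controlled: $\ell_A(\calS_1)=\ell_A(I^2/QI)=3$. From the Bourbaki sequence $0\to B(-1)^{\oplus(\ell-1)}\to\calS\to J(m)\to 0$ with $\height_B J\ge 2$, we get $\ell_A(J_{m+1})=3-(\ell-1)=4-\ell$, which must be positive, so $\ell\in\{1,2,3\}$. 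The case $\ell=3$ forces $J=B$, hence $m=-1$ and $\calS\cong B(-1)^{\oplus3}$ (so $\rme_2=\ell_A(A/I)-\rme_0+\rme_1$), but then $\ell_A(I^3/Q^2I)=\ell_A(\calS_2)=3{\cdot}\binom{d}{1}=3d$ contradicting the hypothesis; and $\ell=1$ would give $\ell_A(J_{m+1})=3$ with $\height_B J\ge 2$, which is impossible for such a small ideal in the polynomial ring $B$ (an ideal of height $\ge 2$ has Hilbert function of degree $\le d-3$, too small to start at $3$ in degree $m+1\ge 1$ unless one checks the low-dimensional cases — the point is $J$ of height $\ge2$ with $3$ generators in a single degree cannot occur here because then $\calS$ would already be free of rank $\ge$ something; more carefully, $\ell=1$ gives $\calS\cong J(m)$ torsionfree of rank one, so $J$ is a height-one graded ideal, contradicting $\height_B J\ge2$ unless $J=B$, i.e. $\calS$ free of rank one, forcing $\ell_A(I^2/QI)=1$, not $3$). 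Hence $\ell=2$, so $c:=\ell_A(I^2/QI)-\ell+1=2$, and by Corollary \ref{b3.4}(2) we already know the structure; one then must pin down $J$ as the ideal $(X_1,X_2)B$ of height exactly two and compute its minimal free resolution — which is the Koszul complex $0\to B(-2)\to B(-1)^{\oplus2}\to J\to 0$ — and splice it with the split sequence $0\to B(-1)\to\calS\to J\to 0$ to obtain resolution (i) when $d=2$, or resolution (ii) when $d\ge3$ (the Koszul resolution of $(X_1,X_2,X_3)B$ being $0\to B(-3)\to B(-2)^{\oplus3}\to B(-1)^{\oplus3}\to\calS\to 0$ once one verifies $\calS\cong(X_1,X_2,X_3)B$ in that subcase).

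The subtlety I expect is distinguishing the two subcases (i) and (ii) inside $\ell=2,c=2$, and matching them to the two possible values of $\ell_A(I^3/Q^2I)$. The length $\ell_A(I^3/Q^2I)=\ell_A(\calS_2)$ is computed from the Bourbaki sequence: $\ell_A(\calS_2)=(\ell-1)\ell_A(B_1)+\ell_A(J(m)_2)$ where $m=0$. Now $\ell_A(J_2)$ depends on whether $J=(X_1,X_2)B$ is viewed inside $B=k[X_1,\dots,X_d]$ with $d=2$ or $d\ge3$: for $d=2$, $(X_1,X_2)B=B_+$, so $\ell_A(J_2)=\dim_k B_2=3$, giving $\ell_A(\calS_2)=1{\cdot}2+3=5=3{\cdot}2-1=3d-1$; for $d\ge3$, $\ell_A(J_2)=\binom{d+1}{d-1}-1=\binom{d+1}{2}-1$ — wait, one should instead use the Koszul resolution to get $\ell_A((B/J)_2)=\binom{d-2+1}{1}+\cdots$; the clean route is $\ell_A(J_n)=\ell_A(B_n)-\ell_A((B/J)_n)$ and $B/J\cong k[X_3,\dots,X_d]$, so $\ell_A(J_2)=\binom{d+1}{2}-\binom{d-1}{2}$... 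I will actually track this via the resolutions directly so that the alternating sum of binomials gives exactly the stated Hilbert polynomials. The key numeric identities to verify are $\ell_A(I^3/Q^2I)=3d-1$ in case (i) and $=3d-3$ in case (ii), and the correspondence $\depth G=\depth_\calT\calS-1$: in case (i), $\calS$ has depth $d-1$ (projective dimension one over $B$ by resolution (i)), so $\depth G=d-2$; in case (ii), $\calS$ has projective dimension two over $B$, so $\depth_\calT\calS=d-2$ and $\depth G=d-3$.

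For the reverse implication $(2)\Rightarrow(1)$: from either resolution one reads $\ell_A(\calS_1)=3$, hence $\ell_A(I^2/QI)=3$; the equality $\calS=B\calS_1$ (visible from the resolution, since $\calS$ is generated in degree $1$) together with $\fkm\calS=0$ (automatic, as $\calS$ is a $B$-module) gives back $I^3=QI^2$ and $\fkm I^2\subseteq QI$ via Lemma \ref{b3.1}; and $\ell_A(I^3/Q^2I)=\ell_A(\calS_2)<3d$ follows from the computation in the previous paragraph (both $3d-1<3d$ and $3d-3<3d$). The equality $\rme_1=\rme_0-\ell_A(A/I)+\rme_2+1$ comes from Proposition \ref{b3.2} since $m=0$ in both subcases (as $J\ne B$ but $J=BJ_1$), which also feeds Corollary \ref{b3.4}(2). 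Finally the explicit Hilbert functions are obtained by substituting the $B$-free resolution of $\calS$ into Proposition \ref{a3.1}(3): $\ell_A(A/I^{n+1})=\rme_0\binom{n+d}{d}-(\rme_0-\ell_A(A/I))\binom{n+d-1}{d-1}-\ell_A(\calS_n)$, where $\ell_A(\calS_n)$ is the alternating sum of the shifted Hilbert functions $\binom{n+d-1-j}{d-1}$ coming from the resolution; the arithmetic is routine once the resolution is fixed. The main obstacle, as noted, is not any single hard step but the careful bookkeeping that correctly separates $d=2$ (only (i) possible) from $d\ge3$ (both possible, and (ii) needs $d\ge4$ for the displayed polynomial in subcase (ii) to make sense since $\binom{n+d-4}{d-4}$ requires $d\ge4$; for $d=3$ the term is interpreted as $0$ for $n\ge1$ and handled separately).
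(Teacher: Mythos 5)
Your case analysis on $\ell=\ell_{\calT_\fkp}(\calS_\fkp)$ goes wrong at $\ell=1$, and this error propagates. You rule out $\ell=1$ by arguing that $\calS\cong J(m)$ torsionfree of rank one forces $J$ to be a \emph{height-one} ideal, contradicting $\height_B J\ge 2$. That confuses rank with height: every nonzero ideal of the domain $B$ has rank one as a $B$-module, whatever its height (e.g.\ $(X_1,X_2,X_3)B$ has rank one and height three). So $\ell=1$ is not excluded; in fact it is precisely the case that produces resolution (ii). In the paper's proof, $\ell=1$ is handled by the rank-one structure theorem of Goto--Nishida--Ozeki (\cite[Theorem 1.2]{GNO2}): since $\mu_B(\calS)=\ell_A(I^2/QI)=3$, one gets $\calS\cong (X_1,X_2,X_3)B$, which forces $d\ge 3$, has $\ell_A(\calS_2)=3d-3$, $\rme_1=\rme_0-\ell_A(A/I)+1$, and the Koszul resolution in (ii). By dropping this case you lose half of the equivalence.

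Consequently your attempt to recover (ii) inside the branch $\ell=2$, $c=2$ by distinguishing $d=2$ from $d\ge 3$ cannot work: in case (ii) the Sally module has rank one (see the table: $\ell_{\calT_\fkp}(\calS_\fkp)=1$), so it can never arise from $\ell=2$; and for $\ell=2$ with $m=0$ the computation $\ell_A(\calS_2)=(\ell-1)d+\ell_A(J_2)=d+(2d-1)=3d-1$ holds for \emph{every} $d\ge2$ (with $J$ generated by two linear forms, $B/J\cong k[X_3,\dots,X_d]$), so the resolution is (i) regardless of $d$. The dichotomy (i)/(ii) is governed by $\ell\in\{2,1\}$, not by the dimension. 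A further gap in the $\ell=2$ branch: Proposition \ref{b3.2} only gives $m\in\{0,1\}$ there, and you simply set $m=0$ (or invoke Corollary \ref{b3.4}(2), which presupposes exactly the equality $m=0$ you need). The hypothesis $\ell_A(I^3/Q^2I)<3d$ is what eliminates $m=1$: if $m=1$ then $J$ is generated by a regular sequence of two quadrics, so $\ell_A(\calS_2)=d+2d=3d$, a contradiction. The paper uses this, together with the Hilbert--Burch/Koszul splice you describe, to land in (i). Your reverse implication and the Hilbert-function bookkeeping are fine in outline, but the forward implication as written misses case (ii) and does not justify $m=0$.
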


\begin{proof}
We have only to show the implication $(1)\Rightarrow (2)$. Set $\ell=\ell_{\calT_\fkp}(\calS_\fkp)$. 
Since there is a surjection $B(-1)^{\oplus 3}\to \calS$, $\ell\le 3$. If $\ell=3$, then the surjection is isomorphism, which is a contradiction for $\ell_A (\calS_2)=\ell_A(I^3/Q^2I)<3d$. If $\ell=1$, $\calS\cong (X_1, X_2, X_3)B$ by \cite[Theorem 1.2.]{GNO2}. Hence we may assume $\ell=2$. By Proposition \ref{b3.2}, there exists $0\to B(-1) \to \calS \to J(m) \to 0$, where $\height_B J\ge 2$ and $m=0$ or $1$. Since $2\le \height_B J\le \mu_B(J)=2$, $J$ is generated by a regular sequence of degree $m+1$. Hence we have the graded exact sequence
$0\to B(-2m-2)\to B(-m-1)^{\oplus 2}\to J \to 0$ as $B$-modules, thus
\[
\xymatrix{
&&&0\ar[d]&\\
&&&B(-m-2)\ar[d]&\\
&&&B(-1)^{\oplus 2}\ar[d]&\\
0\ar[r] & B(-1) \ar[r] & \calS \ar[r] & J(m) \ar[r] \ar[d]& 0\\
&&&0 &.\\
}
\]
The pullback of the above diagram gives the short exact sequence in (i) since $m=0$ by $\ell_A(I^3/Q^2I)<3d$.
\end{proof}

Note that the rank of Sally module in Theorem \ref{b3.6}(2)(i) is two. In general, the structure of the Sally module of rank two is quite open (\cite[p.883]{GNO2} and \cite[4.4]{RV}). 
The case of $\ell_A (I^2/QI)=4$ can also be classified as the following cases if $\dim A=2$.

\begin{prop}\label{b3.65}
Suppose that $d=2$. Then the following conditions are equivalent.
\begin{enumerate}[{\rm (1)}] 
\item $I^3=QI^2$, $\fkm I^2\subseteq QI$, $\ell_A (I^2/QI)=4$, and $\ell_A(I^3/Q^2I)<8$.
\item The graded minimal $B$-free resolution of the Sally module is either
\begin{enumerate}[{\rm (i)}] 
\item $0\to B(-2)\to B(-1)^{\oplus 4}\to \calS\to 0$ or
\item $0\to B(-2)^{\oplus 2}\to B(-1)^{\oplus 4}\to \calS\to 0$.
%
\end{enumerate}
\end{enumerate}
When this is the case, $\depth G=0$ and we have the following.

\begin{enumerate}[{\rm (i)}] 
\item $\ell_A(I^3/Q^2I)=7$, $\rme_1=\rme_0-\ell_A (A/I)+3$, $\mathrm{e}_1= \mathrm{e}_0- \ell_A (A/I) + \mathrm{e}_2 + 1$, and
\begin{center}
$\ell_A (A/I^{n+1})=\rme_0{\cdot}\binom{n+2}{2} -\rme_1{\cdot}\binom{n+1}{1}+2$ for all $n\ge 1$.
\end{center}
\item $\ell_A(I^3/Q^2I)=6$, $\rme_1=\rme_0-\ell_A (A/I)+2$, $\mathrm{e}_1= \mathrm{e}_0- \ell_A (A/I) + \mathrm{e}_2 + 2$, and
\begin{center}
$\ell_A (A/I^{n+1})=\rme_0{\cdot}\binom{n+2}{2} - \rme_1{\cdot}\binom{n+1}{1}$ for all $n\ge 1$.
\end{center}
\end{enumerate}



\end{prop}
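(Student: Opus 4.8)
The plan is to follow the strategy of the proof of Theorem \ref{b3.6}. I will establish the implication $(1)\Rightarrow(2)$ together with the identification of which of the two resolutions occurs; the implication $(2)\Rightarrow(1)$ and all the numerical assertions of (i) and (ii) will then follow by reading off the Hilbert function of the Sally module from the prescribed resolution and applying Proposition \ref{a3.1}(3),(4),(5). Here one uses that condition (2) already forces $\calS$ to be a $B$-module generated in degree one, so that Lemma \ref{b3.1} yields $I^3=QI^2$ and $\fkm I^2\subseteq QI$, together with $\ell_A(I^2/QI)=\ell_A(\calS_1)$, $\ell_A(I^3/Q^2I)=\ell_A(\calS_2)$, and $\rme_1-\rme_0+\ell_A(A/I)=\rank_B\calS$.

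For $(1)\Rightarrow(2)$, put $\ell=\ell_{\calT_\fkp}(\calS_\fkp)=\rank_B\calS$. Since $\ell_A(I^2/QI)=4>0$ we have $I^2\ne QI$ and $\calS\ne0$, and by Lemma \ref{b3.1} the module $\calS$ is generated in degree one by $\ell_A(\calS_1)=4$ elements; thus there is a graded surjection $B(-1)^{\oplus4}\twoheadrightarrow\calS$, so $\ell\le4$. The case $\ell=4$ cannot occur: then this surjection is an isomorphism and $\ell_A(I^3/Q^2I)=\ell_A(\calS_2)=4\,\ell_A(B_1)=8$, against the hypothesis. For $\ell\le3$ I would invoke the Bourbaki sequence of Proposition \ref{b3.2},
\[
0\to B(-1)^{\oplus(\ell-1)}\to\calS\to J(m)\to0,\qquad \height_BJ\ge2\ \text{if}\ J\ne B,
\]
with $-1\le m\le\ell-1$. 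As $\calS$ is generated in degree one, so is $J(m)$; hence $J=BJ_{m+1}$, the ideal $J$ is minimally generated by its degree-$(m+1)$ component, and comparing degree-one pieces of the sequence gives $\ell_A(J_{m+1})=4-(\ell-1)=5-\ell$. Combined with $\ell_A(J_{m+1})\le\ell_A(B_{m+1})=m+2$ and $m\le\ell-1$, this yields $5-\ell\le\ell+1$, so $\ell\in\{2,3\}$, and moreover $m=1$ if $\ell=2$ while $m\in\{0,1,2\}$ if $\ell=3$.

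Next I would rule out $\ell=3$ with $m\in\{1,2\}$. In those cases $J$ is generated by $5-\ell=2$ forms of degree $m+1$ which, since $\height_BJ\ge2$ in the factorial domain $B=k[X_1,X_2]$, have no common factor, hence form a regular sequence; so $J$ is a complete intersection with $B/J$ of Hilbert series $(1+t+\cdots+t^m)^2$, and a short computation gives $\ell_A(\calS_2)=2\,\ell_A(B_1)+\ell_A(J_{m+2})=4+4=8$, contradicting $\ell_A(I^3/Q^2I)<8$. Hence $\ell=3$ forces $m=0$, so $J_1=B_1$ and $J=(X_1,X_2)B$, while $\ell=2$ forces $m=1$, so $J_2=B_2$ and $J=(X_1,X_2)^2B$. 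In either case $\calS$ is torsionfree over the two-dimensional regular ring $B$ (Lemma \ref{b3.1}) and not free (as $\mu_B(\calS)=4\ne\ell=\rank_B\calS$), so $\pd_B\calS=1$ and the minimal resolution has the form $0\to F_1\to B(-1)^{\oplus4}\to\calS\to0$ with $F_1\ne0$; then $F_1$ is determined by the Hilbert series of $\calS$ (computed from the Bourbaki sequence together with the Koszul, resp.\ Hilbert--Burch, resolution of $(X_1,X_2)B$, resp.\ $(X_1,X_2)^2B$), giving $F_1=B(-2)$ when $\ell=3$ and $F_1=B(-2)^{\oplus2}$ when $\ell=2$; these are precisely (i) and (ii). Finally $\pd_B\calS=1$ gives $\depth_\calT\calS=\depth_B\calS=1<2=d$, whence $\depth G=0$ by Proposition \ref{a3.1}(5)(ii), and the values of $\rme_1$, $\rme_2$, $\ell_A(I^3/Q^2I)$ and the Hilbert polynomial then follow from Proposition \ref{a3.1}(3),(4) exactly as in Theorem \ref{b3.6}.

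The step I expect to be the main obstacle is the elimination of $\ell=3$, $m\in\{1,2\}$: one has to be sure that the two generators of $J$ are forced to be a regular sequence (so that $J$ is a complete intersection) and that the resulting length $\ell_A(\calS_2)$ lands exactly on the forbidden value $8$ rather than on something strictly smaller — this is where the precise Hilbert function of a codimension-two complete intersection of two forms of equal degree in $k[X_1,X_2]$ enters. A lesser technical point is the passage from the Hilbert series of $\calS$ to its graded Betti numbers, but this causes no trouble here since $\pd_B\calS\le1$ pins the resolution to the shape $0\to F_1\to B(-1)^{\oplus4}\to\calS\to0$, so $F_1$ is read off directly from $\ell_A(\calS_n)$.
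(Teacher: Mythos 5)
Your argument is correct and follows essentially the same route as the paper: bound the rank $\ell$ of $\calS$ by $4$ and exclude $\ell=4$ and $\ell=1$, apply the Bourbaki sequence of Proposition \ref{b3.2} with the degree count $\ell_A(J_{m+1})=5-\ell$, kill the remaining values of $m$ by computing $\ell_A(\calS_2)=8$ for the complete intersection cases, and then read off the resolution, $\depth G$, and the numerical data from Proposition \ref{a3.1}. The only (harmless) deviation is in the case $\ell=2$, where you identify $J=(X_1,X_2)^2$ directly from $\ell_A(J_2)=\ell_A(B_2)=3$ and recover the resolution via $\pd_B\calS\le 1$ and the Hilbert series, whereas the paper reaches the same conclusion by invoking the Hilbert--Burch theorem to pin down the syzygy degrees of $J$; both are valid.
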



\begin{proof}
We have only to show the implication $(1)\Rightarrow (2)$. Set $\ell=\ell_{\calT_\fkp}(\calS_\fkp)$. 
Similar to the proof of Theorem \ref{b3.6}, we may assume that $\ell=2$ or $3$. If $\ell=3$, there exists an exact sequence $0\to B(-1)^{\oplus 2} \to \calS \to J(m) \to 0$, where $J$ is a graded ideal of $B$ and $0\le m\le 2$. We can take $J$ so that $\height_B J=\mu_B(J)= 2$, that is, $J$ is generated by a regular sequence of degree $m+1$.
Hence we have the graded exact sequence
\[
0\to B(-2m-2)\to B(-m-1)^{\oplus 2}\to J \to 0
\]
as $B$-modules, and this concludes $\calS$ has the structure in (i) since $\ell_A(I^3/Q^2I)<8$. Suppose that $\ell=2$. 
Then there exists an exact sequence $0\to B(-1) \to \calS \to J(m) \to 0$, where $J$ is a graded ideal with $\height_B J=2$, $\mu_B(J)=3$, and $0\le m\le 1$. Thus $m=1$ since $\ell_A (J_1)\le \ell_A (B_1) = 2$ and $\ell_A (I^2/QI)=4$. Hence we have the graded exact sequence
\[
0\to B(-n_1)\oplus B(-n_2)\xrightarrow{\mathbb{A}} B(-2)^{\oplus 3}\to I\to 0 \eqno{(**)}
\]
as graded $B$-modules, where $n_1$, $n_2$ are integers and $\mathbb{A}=\left(
\begin{smallmatrix}
a & d\\
b & e\\
c & f
\end{smallmatrix}
\right)$ denotes a representation matrix. Therefore, the graded minimal $B$-free resolution of the Sally module is
\[
0\to B(1-n_1)\oplus B(1-n_2)\to B(-1)^{\oplus 4}\to \calS\to 0.
\]
We will show that $n_1=n_2=3$.
Since $\ell_A (I^2/QI)=4$ and $\ell_A(I^3/Q^2I)<8$, we may assume that $n_1=3$. By $(**)$, $a$, $b$, $c\in B_1$ and $d$, $e$, $f\in B_{n_2-2}$ since 

$\left(
\begin{smallmatrix}
a \\
b \\
c
\end{smallmatrix}
\right)\in [ B(-2)^{\oplus 3}]_3$ and
$\left(
\begin{smallmatrix}
d \\
e \\
f
\end{smallmatrix}
\right)\in [ B(-2)^{\oplus 3}]_{n_2}$. On the other hand, $J$ is generated by $2\times 2$-minors of the matrix $\mathbb{A}$ by Hilbert-Burch theorem. By noting that $J=BJ_2$, we have $n_2=3$.
\end{proof}

\begin{rem}\label{b3.9}
If $d\ge 3$, there is a Sally module which have another form. In fact, 
let $A=k[[X, Y, Z]]$ be a formal power series ring over a field $k$. Set $Q=(X^3, Y^3, Z^3)$ and $I=Q + (X^2Y, XY^2, Y^2Z, YZ^2, X^2Z, XZ^2)$. Then $I^3=QI^2$, $\fkm I^2\subseteq QI$, $\ell_A (I^2/QI)=4$, and $\ell_A (I^3/Q^2I)=9$. Hence this example does not satisfy either of the conditions in Theorem \ref{b3.65}.

\end{rem}

We close this paper with examples of Theorem \ref{b3.6} and Proposition \ref{b3.65}. Note that the examples of rank one are given by \cite[Theorem 5.1]{GNO2}.

\begin{ex}\label{b3.8}
Let $A=k[[X, Y]]$ be a formal power series ring over a field $k$ and $B=k[X, Y]$ a polynomial ring over the field $k$. Then we have the following.
\begin{enumerate}[{\rm (1)}] 
\item Set $Q=(X^7, Y^7)$ and $I=Q + (XY^6, X^2Y^5, X^4Y^3, X^5Y^2)$. Then ${I}^3=Q{I}^2$, $\fkm {I}^2\subseteq~QI$, $\ell_A ({I}^2/QI)=3$, and $\ell_A ({I}^3/{Q}^2I)=5$. Hence, 
\[
0\to B(-2)\to B(-1)^{\oplus 3}\to \calS_{Q} (I)\to 0
\]
is exact as graded $B$-modules.
\item Set $Q=(X^8, Y^8)$ and $I=Q + (X^2Y^6, X^3Y^5, X^5Y^3, X^6Y^2)$. Then ${I}^3=Q{I}^2$, $\fkm {I}^2\subseteq~QI$, $\ell_A ({I}^2/QI)=4$, and $\ell_A ({I}^3/{Q}^2I)=7$. Hence, 
\[
0\to B(-2)\to B(-1)^{\oplus 4}\to \calS_{Q} (I)\to 0
\]
is exact as graded $B$-modules.
\item Set $Q=(X^7, Y^7)$ and $I=Q + (XY^6, X^3Y^4, X^4Y^3, X^6Y)$. Then ${I}^3=Q{I}^2$, $\fkm {I}^2\subseteq~QI$, $\ell_A ({I}^2/QI)=4$, and $\ell_A ({I}^3/{Q}^2I)=6$. Hence, 
\[
0\to B(-2)^{\oplus 2}\to B(-1)^{\oplus 4}\to \calS_{Q} (I)\to 0
\]
is exact as graded $B$-modules.
\end{enumerate}
\end{ex}





\begin{acknowledgments}
The author is grateful to Koji Nishida for suggestions about the manuscript, which he gave the author while preparing for the final version. The author would like to thank the referee for his/her valuable comments.
\end{acknowledgments}



\end{document}